\newtheorem{definition}{Definition}[section]
\newtheorem{theorem}{Theorem}[section]
\newtheorem{lemma}{Lemma}[section]
\newtheorem{corollary}{Corollary}[section]
\newtheorem{remark}{Remark}[section]
\newcommand{\RN}{\mathbb R^N}
\newcommand{\Om}{\Omega}
\newcommand{\iy}{\infty}
\newcommand{\dd}{\delta}
\newcommand{\la}{\lambda}
\newcommand{\La}{\Lambda}
\newcommand{\R}{\mathbb R}
\newcommand{\al}{\alpha}
\newcommand{\wH}{\widetilde H}
\newcommand{\bb}{\beta}
\newcommand{\e}{\varepsilon}
\newcommand{\vp}{\varphi}
\newcommand{\bt}{\begin{theorem}}
\newcommand{\et}{\end{theorem}}
\newcommand{\bl}{\begin{lemma}}
\newcommand{\el}{\end{lemma}}
\newcommand{\bd}{\begin{definition}}
\newcommand{\ed}{\end{definition}}
\newcommand{\bc}{\begin{corollary}}
\newcommand{\ec}{\end{corollary}}
\newcommand{\bp}{\begin{proof}}
\newcommand{\ep}{\end{proof}}
\newcommand{\bx}{\begin{example}}
\newcommand{\ex}{\end{example}}
\newcommand{\bi}{\begin{exercise}}
\newcommand{\ei}{\end{exercise}}
\newcommand{\bo}{\begin{prop}}
\newcommand{\eo}{\end{prop}}
\newcommand{\br}{\begin{remark}}
\newcommand{\er}{\end{remark}}
\newcommand{\be}{\begin{equation}}
\newcommand{\ee}{\end{equation}}
\newcommand{\ba}{\begin{align}}
\newcommand{\ea}{\end{align}}
\newcommand{\ga}{\gamma}
\newcommand{\sg}{\sigma}
\newcommand{\bean}{\begin{eqnarray*}}
\newcommand{\eean}{\end{eqnarray*}}
\numberwithin{equation}{section}
\begin{document}

\title{\bf Infinitely many sign-changing and semi-nodal solutions for a nonlinear
Schr\"{o}dinger system\thanks{Zou is supported by NSFC (11025106, 11371212, 11271386) and the Both-Side Tsinghua Fund.  E-mail:
chenzhijie1987@sina.com (Chen);\quad cslin@math.ntu.edu.tw (Lin, corresponding author); \quad wzou@math.tsinghua.edu.cn (Zou)}}
\date{}
\author{{\bf Zhijie Chen$^{1}$,   Chang-Shou Lin$^{2}$,
  Wenming Zou$^{3}$}\\
\footnotesize {\it $^{1, 3}$Department of Mathematical Sciences, Tsinghua University, Beijing 100084, China}\\
\footnotesize{\it $^{1}$Current address: Center for Advanced Study in Theoretical Science,}\\
\footnotesize{\it National Taiwan
University, Taipei 106, Taiwan}\\
\footnotesize {\it $^2$Taida Institute for Mathematical Sciences, Center for Advanced Study in}\\
\footnotesize {\it Theoretical Science, National Taiwan
University, Taipei 106, Taiwan} }

\maketitle

\begin{center}
\begin{minipage}{120mm}
\begin{center}{\bf Abstract}\end{center}

We study the following coupled
Schr\"{o}dinger equations which have appeared as several models from mathematical physics:
\begin{displaymath}
\begin{cases}-\Delta u_1 +\la_1 u_1 =
\mu_1 u_1^3+\beta u_1 u_2^2, \quad x\in \Omega,\\
-\Delta u_2 +\la_2 u_2 =\mu_2 u_2^3+\beta u_1^2 u_2,  \quad   x\in
\Om,\\
u_1=u_2=0  \,\,\,\hbox{on \,$\partial\Om$}.\end{cases}\end{displaymath}
Here $\Om$ is a smooth bounded domain in $\R^N (N=2, 3)$ or $\Om=\RN$,
$\la_1,\, \la_2$, $\mu_1,\,\mu_2$ are all positive constants and the coupling constant $\bb<0$. We show that
this system has infinitely many sign-changing solutions. We also obtain infinitely
many semi-nodal solutions in the following sense: one component changes
sign and the other one is positive.
The crucial idea of our proof, which has never been used for this system
before, is to study a new problem with two constraints.
Finally, when $\Om$ is a bounded domain, we show that this system has
a least energy sign-changing solution,
both two components of which have exactly two nodal domains, and
we also study the asymptotic behavior of solutions as $\beta\to -\infty$ and
phase separation is expected.\\

\noindent {\it Mathematical Subject Classification: }  35J20, 35J50, 35J60.

\end{minipage}
\end{center}

\section{Introduction}

In this paper we study solitary wave solutions of the coupled Gross-Pitaevskii equations (cf. \cite{CLLL}):
\be\label{eq1}
\begin{cases}-i \frac{\partial}{\partial t}\Phi_1=\Delta \Phi_1+
\mu_1 |\Phi_1|^2 \Phi_1+\beta |\Phi_2|^2\Phi_1, \quad x\in \Omega, \,\,t>0,\\
-i \frac{\partial}{\partial t}\Phi_2=\Delta \Phi_2+
\mu_2|\Phi_2|^2 \Phi_2+\beta |\Phi_1|^2\Phi_2,  \quad x\in \Omega, \,\,t>0,\\
\Phi_j=\Phi_j(x,t)\in\mathbb{C},\quad j=1,2,\\
\Phi_j(x,t)=0,  \quad x\in \partial\Om, \,\,t>0, \,\,j=1,2,\end{cases}\ee
where $\Om=\RN (N=2, 3)$ or $\Om\subset\RN$ is a smooth bounded domain, $i$ is the imaginary unit,
$\mu_1,\mu_2 >0$ and $\beta\neq 0$ is a coupling constant.
System (\ref{eq1}) arises
in mathematical models from several physical
phenomena, especially in nonlinear optics.
Physically, the solution $\Phi_j$ denotes the $j^{th}$ component of the beam
in Kerr-like photorefractive media (cf. \cite{AA}).
The positive constant $\mu_j$ is for self-focusing in the $j^{th}$ component of
the beam, and the coupling constant $\beta$ is
the interaction between the two components of the beam. Problem (\ref{eq1}) also arises
in the Hartree-Fock theory for a double
condensate, i.e., a binary mixture of Bose-Einstein
condensates in two different hyperfine states $|1\rangle$ and $|2\rangle$ (cf. \cite{EGBB}).
Physically, $\Phi_j$ are the corresponding condensate
amplitudes, $\mu_j$ and $\beta$ are the intraspecies and
interspecies scattering lengths. Precisely, the sign of $\mu_j$ represents
the self-interactions of the single state $|j\rangle$.
If $\mu_j>0$ as considered here, it is called the focusing case, in
opposition to the defocusing case where $\mu_j<0$. Besides,
the sign of $\beta$ determines whether the interactions of
states $|1\rangle$ and $|2\rangle$
are repulsive or attractive, i.e., the interaction is
attractive if $\beta>0$, and the interaction is repulsive
if $\beta<0$, where the two states are in strong competition when $\beta$ is negative and very large.

To obtain solitary wave solutions of system (\ref{eq1}),
we set $\Phi_j(x, t)=e^{i\la_j t}u_j(x)$ for $j=1, 2$, where $u_j(x)$ are real-valued functions. Then system (\ref{eq1})
is reduced to the following elliptic system
\be\label{eq2}
\begin{cases}-\Delta u_1 +\la_1 u_1 =
\mu_1 u_1^3+\beta u_1 u_2^2, \quad x\in \Omega,\\
-\Delta u_2 +\la_2 u_2 =\mu_2 u_2^3+\beta u_1^2 u_2,  \quad   x\in\Om,\\
u_1=u_2=0  \,\,\,\hbox{on \,$\partial\Om$}.\end{cases}\ee
Here, for the case $\Om=\RN$, the boundary condition $u_1=u_2=0$ on $\partial\Om$ means
$$u_1(x), u_2(x)\to 0\quad\text{as}\,\,\,|x|\to +\iy.$$
It is well known that finite energy solutions
of (\ref{eq2}) correspond to the critical points of $C^2$ functional $E_\bb: H_0^1(\Om)\times H_0^1(\Om)\to \R$
given by
{\allowdisplaybreaks
\begin{align}\label{eq2-1}
E_\bb(u_1, u_2):=&\frac{1}{2} \int_{\Om}(|\nabla u_1|^2+\la_1 u_1^2+|\nabla u_2|^2+\la_2 u_2^2)dx\nonumber\\
&-\frac{1}{4}\int_{\Om}(\mu_1 u_1^4+\mu_2u_2^4)dx-\frac{\beta}{2} \int_{\Om}u_1^2u_2^2dx.
\end{align}
}%

\begin{definition}\label{definition}
We call a solution $(u_1, u_2)$ {\it nontrivial}
if $u_j\not\equiv 0$ for $j=1, 2$, a solution $(u_1, u_2)$ {\it semi-trivial} if $(u_1, u_2)$ is type
of $(u_1, 0)$ or $(0, u_2)$. A solution $(u_1, u_2)$ is
called {\it positive} if $u_j > 0$ in $\Om$ for $j=1, 2$, a solution $(u_1, u_2)$
{\it sign-changing} if both $u_1$ and $u_2$ change sign,
a solution $(u_1, u_2)$ {\it semi-nodal} if one component is positive and the other one changes sign.
\end{definition}

\begin{definition}
A nontrivial solution $(u_1, u_2)$ is called a least energy solution,
if it has the least energy among all nontrivial solutions, i.e., $E_\bb(u_1, u_2)\le E_\bb(v_1, v_2)$ for any nontrivial solution $(v_1, v_2)$ of (\ref{eq2}).
A sign-changing solution $(u_1, u_2)$ is called a least energy sign-changing solution,
if it has the least energy among all sign-changing solutions.
\end{definition}

In the last decades, system (\ref{eq2}) has received
great interest from many mathematicians.
When $\Om$ is the entire space $\RN$, the existence of
least energy and other finite energy solutions of (\ref{eq2})
was studied in \cite{AC2, BW, BWW, CZ1, LW1, LW, LWZ, MMP, MMP1, S, TV} and references therein.
In particular, when $\bb>0$ is sufficiently large, multiple radially symmetric sign-changing solutions
of (\ref{eq2}) were obtained in \cite{MMP1}, where the radial symmetry of $\RN$ plays a crucial role in their proof. Under
assumptions $\la_i>0, \mu_i>0$ and $\bb<0$, Liu and Wang \cite{LWZ} proved that
system (\ref{eq2}) has infinitely many nontrivial solutions. In fact, they studied a general $m$-coupled system ($m\ge 2$). Remark that
whether solutions obtained in \cite{LWZ} are positive or sign-changing
are not known.

When $\Om\subset \RN (N=2, 3)$ is a smooth bounded domain, there are also many papers studying (\ref{eq2}).
Lin and Wei \cite{LW2} proved that a least energy solution
of (\ref{eq2}) exists within the range $\bb\in (-\iy, \bb_0)$, where $0<\bb_0<\sqrt{\mu_1\mu_2}$.
In case where $\la_1=\la_2>0$, $\mu_1=\mu_2>0$ and $\bb\le-\mu_1$, Dancer, Wei and Weth \cite{DWW} proved
the existence of infinitely many positive solutions of (\ref{eq2}),
while the same result was proved for the case $\la_1=\la_2<0$ by Noris and Ramos \cite{NR}.
When $\Om$ is a ball, an interesting multiplicity result on
positive radially symmetric solutions was given in \cite{WW2}.
Remark that, since $\la_1=\la_2$ and $\mu_1=\mu_2$, so system (\ref{eq2})
is invariant under the transformation $(u_1, u_2)\mapsto (u_2, u_1)$, which
plays a crucial role in \cite{DWW, NR, WW2}. Later, by using a global bifurcation approach,
the result of \cite{WW2} was
reproved by \cite{BDW} without requiring the symmetric condition $\mu_1=\mu_2$, but in their proof
the assumption $\la_1=\la_2$ plays a crucial role. Under
assumptions $\la_i>0, \mu_i>0$ and $\bb<0$ without
requiring $\la_1=\la_2$ or $\mu_1=\mu_2$, Sato and Wang \cite{SW} proved that
system (\ref{eq2}) has infinitely many semi-positive solutions (i.e., at least one component is positive).
Note that all the papers mentioned above deal with the subcritical case $N\le 3$ (i.e., the cubic nonlinearities are all of subcritical growth). Recently,
Chen and Zou \cite{CZ} studied the existence and properties of
least energy solutions of (\ref{eq2}) in the critical case $N=4$.

In a word, for $N=2, 3$, {\it a natural question, which seems to be still
open for both the entire space case and the bounded domain case,
is whether (\ref{eq2}) has infinitely many sign-changing solutions when $\bb<0$}.
This is expected by many experts but no proof has yet been obtained.
Here we can give a positive answer to this open question. Since the results in the entire space case are slightly different
from those in the bounded domain case, in this section
we only state our results in the bounded domain case for the sake of brevity.
The results in the entire space case will be given in Section 6. Our first result is as follows.

\bt\label{th1}Let $N=2, 3$, $\Om\subset \RN$ is a smooth bounded
domain, $\la_1, \la_2$, $\mu_1, \mu_2>0$ and $\bb<0$. Then (\ref{eq2}) has infinitely
many sign-changing solutions $(u_{n, 1}, u_{n, 2})$ such that
$$\|u_{n, 1}\|_{L^\iy(\Om)}+\|u_{n,2}\|_{L^\iy(\Om)}\to+\iy\quad\hbox{as}\,\,\,n\to+\iy.$$\et

\br\label{rmk}Comparing with \cite{BDW, DWW, NR, WW2} where infinitely many positive
solutions were obtained,  we do not need any symmetric assumptions $\la_1=\la_2$
or $\mu_1=\mu_2$.\er

\br\label{rmk2} All the papers mentioned above and this paper deal with the
focusing case $\mu_1, \mu_2>0$. For the defocusing case $\mu_1, \mu_2<0$, related
results can be seen in \cite{CLLL, NTTV1, NTTV, TT, TT1}. In particular, Tavares and Terracini \cite{TT}
studied the following general $m$-coupled system
\be\label{eq3}
\begin{cases}-\Delta u_j -\mu_j u_j^3-\bb u_j\sum_{i\neq j} u_i^2 =\la_{j, \bb} u_j,\\
u_j\in H_0^1(\Om),\quad j=1,\cdots, m,\end{cases}\ee
where $\Om$ is a smooth bounded domain, $\bb<0$ and $\mu_j\le 0$. Then \cite[Theorem 1.1]{TT} says that for each
fixed $\bb<0$ and $\mu_1,\cdots,\mu_m\le 0$, there exist
infinitely many $\la=(\la_{1, \bb},\cdots,\la_{m, \bb})\in \R^m$ and $u=(u_1, \cdots, u_m)\in H_0^1(\Om, \R^m)$
such that $(u, \la)$ are sign-changing solutions of (\ref{eq3}).
That is, for each fixed $\bb<0$ and $\mu_1,\cdots,\mu_m\le 0$, $\la_{j, \bb}$ is not
fixed a {\it priori} and appears as a Lagrange multiplier in \cite{TT}. Our result is different from
\cite[Theorem 1.1]{TT} on two aspects: one is that we deal with the focusing case $\mu_j>0$, the other one
is that $\la_j, \mu_j$ and $\bb$ are all fixed constants in Theorem \ref{th1}.
To the best of our knowledge, our result for system (\ref{eq2}) is new.
\er

As pointed out before,
Lin and Wei \cite{LW2} proved for $\bb\in (-\iy, \bb_0)$ that (\ref{eq2}) has a least energy solution
which turns out to be a positive solution. Since (\ref{eq2})
has infinitely many sign-changing solutions for any $\bb<0$, another natural question is whether (\ref{eq2}) has a least energy sign-changing
solution, which has not been studied before. Here we can prove the
following result.

\bt\label{th2} Let assumptions in Theorem \ref{th1} hold. Then (\ref{eq2}) has
a least energy sign-changing solution $(u_1, u_2)$. Moreover, both $u_1$ and $u_2$
have exactly two nodal domains.\et

Theorems \ref{th1} and \ref{th2} are both concerned with sign-changing solutions. Besides positive solutions (see \cite{BDW, DWW, WW2}) and
sign-changing solutions, as defined in Definition \ref{definition}, it is natural to suspect that (\ref{eq2}) may have
semi-nodal solutions. Here we
can prove the following result.

\bt\label{th3} Let assumptions in Theorem \ref{th1} hold.
Then (\ref{eq2}) has infinitely many semi-nodal solutions $\{(u_{n,1}, u_{n, 2})\}_{n\ge 2}$ such that
\begin{itemize}
\item[$(1)$] $u_{n, 1}$ changes sign and $u_{n, 2}$ is positive;
\item[$(2)$] $\|u_{n, 1}\|_{L^\iy(\Om)}+\|u_{n,2}\|_{L^\iy(\Om)}\to+\iy\,\,\hbox{as}\,\,n\to+\iy$;
\item[$(3)$] $u_{n, 1}$ has at most $n$ nodal domains. In particular, $u_{2, 1}$ has exactly
two nodal domains, and $(u_{2,1}, u_{2,2})$ has the least energy among all nontrivial
solutions whose first component changes sign.
\end{itemize}
\et

\br\label{rmk3-1} Recently, we found that \cite[Theorem 0.1]{SW} proved that (\ref{eq2}) has infinitely many semi-nodal
solutions for any $\bb\in (-\sqrt{\mu_1\mu_2}, 0)$. Theorem \ref{th3} improves \cite[Theorem 0.1]{SW} on two aspects: one is that we can
obtain infinitely many semi-nodal solutions for $\bb\le-\sqrt{\mu_1\mu_2}$;
the other one is that, in \cite{SW} no properties of the form $(3)$ can be
obtained by their approach. Our proofs in this paper
are completely different from \cite{SW}.\er

\br\label{rmk3} Similarly, we can prove that (\ref{eq2}) has infinitely many semi-nodal
solutions $\{(v_{n,1}, v_{n, 2})\}_{n\ge 2}$
such that $v_{n, 1}$ is positive, $v_{n, 2}$ changes sign and has at most $n$ nodal domains.
In the symmetric case where $\la_1=\la_2$ and $\mu_1=\mu_2$, $(u_{n,1}, u_{n,2})$ obtained
 in Theorem \ref{th3} and $(v_{n, 1}, v_{n, 2})$ may be the same solution in the sense of $u_{n,1}=v_{n,2}$ and $u_{n,2}=v_{n,1}$.
 However, if either $\la_1\neq\la_2$ or
$\mu_1\neq \mu_2$, then $(u_{n,1}, u_{n,2})$
and $(v_{n,1}, v_{n,2})$ are really different solutions.\er

We give some notations here. Throughout this paper,
we denote the norm of $L^p(\Om)$ by $|u|_p =
(\int_{\Om}|u|^p\,dx)^{\frac{1}{p}}$, the norm of $H^1_0(\Om)$ by $\|u\|^2=\int_{\Om}(|\nabla u|^2+u^2)\,dx$ and positive constants
(possibly different in different places) by $C, C_0, C_1, \cdots$. Denote
$\|u\|_{\la_i}^2:=\int_{\Om} (|\nabla u|^2 + \la_i u^2)\,dx$
for convenience. Since we assume $\la_1, \la_2>0$ here, $\|\cdot\|_{\la_i}$ are
equivalent norms to $\|\cdot\|$. Define $H:=H_0^1(\Om)\times H_0^1(\Om)$ with norm $\|(u_1, u_2)\|_H^2:=\|u_1\|_{\la_1}^2+\|u_2\|_{\la_2}^2$.

The rest of this paper is organized as follows. In Section 2 we give the proof of Theorem \ref{th1}.
The main idea of this proof is inspired by \cite{TT},
where a new notion of {\it vector genus} introduced by \cite{TT} will be used to define appropriate minimax values.
Some arguments in our proof are borrowed from \cite{TT} with modifications.
Remark that the ideas in \cite{TT} can not be used directly, and here we will give
some new ideas. For example, to obtain nontrivial solutions of (\ref{eq2}), the crucial idea in this paper is
turning to study a new problem with two constraints.
Somewhat surprisingly, up to our knowledge, this natural idea has never been
used for (\ref{eq2}) in the literature; see Remark \ref{remark} below. In Section 3
we will use general Nehari type manifolds to prove Theorem \ref{th2}.
By giving some modifications to arguments in Sections 2 and 3, we will prove
Theorem \ref{th3} in Section 4.

In Section 5, we will study the limit behavior of solutions obtained here
as $\bb\to-\iy$ by applying results in \cite{NTTV} directly.
It turns out that components of the limiting profile tend to separate in different regions.
This phenomena, called {\it phase separation}, has been well studied for $L^{\iy}$-bounded positive
solutions of (\ref{eq2}) in the case $N=2, 3$
by \cite{WW2, WW3, NTTV}. For other kinds of elliptic systems with strong competition,
phase separation has also been well studied, we refer to \cite{CL0, CTV} and references therein.
The main result of Section 5 is Theorem \ref{th4}.

Finally in Section 6, we will introduce existence results of infinitely many radially
symmetric sign-changing and semi-nodal solutions in the entire
space case. The main results are Theorems \ref{th6-1} and \ref{th6-3}.
The main ideas of the proof are the same as those in Sections 2-4.
However, we will see that some ideas and arguments are quite different from those in the bounded domain case.

After this paper was submitted, we learned from Z.-Q. Wang of a recent work \cite{LLW}, where infinitely many sign-changing solutions of the general $m$-coupled system ($m\ge 2$) were obtained via a quite different method. We should point out that our approach here also works for the general $m$-coupled system ($m\ge 2$).

\section{Proof of Theorem \ref{th1}}
\renewcommand{\theequation}{2.\arabic{equation}}

In the sequel we assume that assumptions in Theorem \ref{th1} hold.
Since we are only concerned with nontrivial solutions,
we denote $\wH:=\{(u_1, u_2)\in H : u_i\neq 0\,\,\hbox{for}\,\, i=1, 2\}$, which is an open subset of $H$. Write $\vec{u}=(u_1, u_2)$ for convenience.

\bl\label{lemma1}For any $(u_1, u_2)\in\wH$, if
\be\label{eq2-2}|\bb|^2\left(\int_{\Om}u_1^2 u_2^2\right)^2\ge \mu_1\mu_2|u_1|_4^4 |u_2|_4^4,\ee
then
$$\sup_{t_1, t_2\ge 0}E_\bb (\sqrt{t_1}u_1, \,\sqrt{t_2} u_2)=+\iy.$$\el

\begin{proof} By (\ref{eq2-2}) there exists $\al>0$ such that
$$\al |\bb|\int_{\Om}u_1^2 u_2^2\ge \mu_1|u_1|_4^4,\quad \frac{1}{\al} |\bb|\int_{\Om}u_1^2 u_2^2\ge \mu_2|u_2|_4^4,$$
which implies
{\allowdisplaybreaks
\begin{align*}
E_\bb(\sqrt{t}u_1, \sqrt{\al t}u_2)=&\frac{1}{2} t\|u_1\|_{\la_1}^2+
\frac{1}{2}\al t\|u_2\|_{\la_2}^2\\
&-\frac{1}{4}\left(t^2\mu_1 |u_1|_4^4+\al^2 t^2\mu_2|u_2|_4^4\right)+\frac{1}{2}\al t^2|\beta| \int_{\Om}u_1^2u_2^2\\
\ge & \frac{1}{2} t\|u_1\|_{\la_1}^2+
\frac{1}{2}\al t\|u_2\|_{\la_2}^2\to+\iy\quad\hbox{as $t\to+\iy$.}
\end{align*}
}%
This completes the proof.\end{proof}

\bl\label{lemma2}For any $\vec{u}=(u_1, u_2)\in\wH$, if
\be\label{eq2-3}|\bb|^2\left(\int_{\Om}u_1^2 u_2^2\right)^2< \mu_1\mu_2|u_1|_4^4 |u_2|_4^4,\ee
then system
\be\label{eq2-4}
\begin{cases}\|u_1\|_{\la_1}^2 =
t_1\mu_1 |u_1|_4^4-t_2|\beta|\int_{\Om} u_1^2 u_2^2\\
\|u_2\|_{\la_2}^2 =
t_2\mu_2 |u_2|_4^4-t_1|\beta|\int_{\Om} u_1^2 u_2^2\end{cases}\ee
has a unique solution
\be\label{eq2-5}
\begin{cases}t_1(\vec{u})=\frac{\mu_2|u_2|_4^4\|u_1\|_{\la_1}^2+|\bb|\|u_2\|_{\la_2}^2\int_{\Om}u_1^2u_2^2}{\mu_1\mu_2|u_1|_4^4 |u_2|_4^4-|\bb|^2(\int_{\Om}u_1^2 u_2^2)^2}>0\\
t_2(\vec{u})=\frac{\mu_1|u_1|_4^4\|u_2\|_{\la_2}^2+|\bb|\|u_1\|_{\la_1}^2\int_{\Om}u_1^2u_2^2}{\mu_1\mu_2|u_1|_4^4 |u_2|_4^4-|\bb|^2(\int_{\Om}u_1^2 u_2^2)^2}>0.\end{cases}\ee
Moreover,
{\allowdisplaybreaks
\begin{align}\label{eq2-6}\sup_{t_1, t_2\ge 0}&E_\bb \left(\sqrt{t_1}u_1, \,\sqrt{t_2} u_2\right)=E_\bb\left(\sqrt{t_1(\vec{u})}u_1, \sqrt{t_2(\vec{u})}u_2\right)\nonumber\\
&=\frac{1}{4}\left(t_1(\vec{u})\|u_1\|_{\la_1}^2+t_2(\vec{u})\|u_2\|_{\la_2}^2\right)\nonumber\\
&=\frac{1}{4}\frac{\mu_2|u_2|_4^4\|u_1\|_{\la_1}^4
+2|\bb|\|u_1\|_{\la_1}^2\|u_2\|_{\la_2}^2\int_{\Om}u_1^2u_2^2+\mu_1|u_1|_4^4\|u_2\|_{\la_2}^4}{\mu_1\mu_2|u_1|_4^4 |u_2|_4^4-|\bb|^2(\int_{\Om}u_1^2 u_2^2)^2}
\end{align}
}%
and $(t_1(\vec{u}), t_2(\vec{u}))$ is the unique maximum point of $E_\bb (\sqrt{t_1}u_1, \sqrt{t_2}u_2)$.
\el

\begin{proof} It suffices to prove (\ref{eq2-6}). Recall that $(t_1(\vec{u}), t_2(\vec{u}))$
is the solution of (\ref{eq2-4}), we deduce that
{\allowdisplaybreaks
\begin{align*}
2t_1 t_2|\bb|\int_{\Om}u_1^2u_2^2 &\le t_1^2\frac{t_2(\vec{u})}{t_1(\vec{u})}|\bb|\int_{\Om}u_1^2 u_2^2
+t_2^2\frac{t_1(\vec{u})}{t_2(\vec{u})}|\bb|\int_{\Om}u_1^2 u_2^2\\
&=t_1^2\mu_1|u_1|_4^4+t_2^2\mu_2|u_2|_4^4-\frac{t_1^2}{t_1(\vec{u})}
\|u_1\|_{\la_1}^2-\frac{t_2^2}{t_2(\vec{u})}\|u_2\|_{\la_2}^2.
\end{align*}
}%
Hence for any $t_1, t_2\ge 0$,
{\allowdisplaybreaks
\begin{align*}
E_\bb\left(\sqrt{t_1}u_1, \sqrt{t_2}u_2\right)=&\frac{1}{2} t_1\|u_1\|_{\la_1}^2+
\frac{1}{2}t_2\|u_2\|_{\la_2}^2\\
&-\frac{1}{4}\left(t_1^2\mu_1 |u_1|_4^4+ t_2^2\mu_2|u_2|_4^4\right)+\frac{1}{2}t_1 t_2|\beta| \int_{\Om}u_1^2u_2^2\\
\le & \left(\frac{t_1}{2}-\frac{t_1^2}{4t_1(\vec{u})}\right)\|u_1\|_{\la_1}^2+
\left(\frac{t_2}{2}-\frac{t_2^2}{4t_2(\vec{u})}\right)\|u_2\|_{\la_2}^2\\
\le &\frac{1}{4}\left(t_1(\vec{u})\|u_1\|_{\la_1}^2+t_2(\vec{u})\|u_2\|_{\la_2}^2\right)\\
=& E_\bb\left(\sqrt{t_1(\vec{u})}u_1, \sqrt{t_2(\vec{u})}u_2\right).
\end{align*}
}%
Therefore (\ref{eq2-6}) holds and $(t_1(\vec{u}), t_2(\vec{u}))$ is also the unique maximum point of $E_\bb (\sqrt{t_1}u_1, \sqrt{t_2}u_2)$ in $[0, +\iy)^2$.\end{proof}

Define
{\allowdisplaybreaks
\begin{align}\label{eq2-8}
&\mathcal{M}^\ast:=\left\{\vec{u}\in H\,\,:\,\, |u_1|_4>1/2,\,\, |u_2|_4>1/2\right\};\nonumber\\
&\mathcal{M}_\bb^\ast:=\left\{\vec{u}\in \mathcal{M}^\ast\,\,:\,\, \hbox{$\vec{u}$ satisfies (\ref{eq2-3})}\right\};\nonumber\\
&\mathcal{M}_\bb^{\ast\ast}:=\left\{\vec{u}\in \mathcal{M}^\ast\,\,:\,\, \mu_1\mu_2-|\bb|^2\left(\int_{\Om}u_1^2 u_2^2\right)^2>0\right\};\nonumber\\
&\mathcal{M}:=\left\{\vec{u}\in H\,\,:\,\, |u_1|_4=1,\,\, |u_2|_4=1\right\},\quad \mathcal{M}_\bb:=\mathcal{M}\cap \mathcal{M}_\bb^\ast.
\end{align}
}%
Then $\mathcal{M}_\bb=\mathcal{M}\cap \mathcal{M}_\bb^{\ast\ast}$. By taking $\varphi_i\in C_0^{\iy}(\Om)$ such that $|\varphi_i|_4=1$ for $i=1, 2$ and $\text{supp}(\varphi_1)\cap\text{supp}(\varphi_2)=\emptyset$, we have $(\varphi_1, \varphi_2)\in \mathcal{M}_\bb$, namely $\mathcal{M}_\bb\neq\emptyset$.
It is easy to check that $\mathcal{M}^\ast$, $\mathcal{M}_\bb^*$, $\mathcal{M}_\bb^{\ast\ast}$ are all open subsets of $H$ and $\mathcal{M}$ is closed.
Define a new functional $J_\bb : \mathcal{M}^\ast\to (0, +\iy]$ by
$$J_\bb (\vec{u}):=\begin{cases}\frac{1}{4}\frac{\mu_2\|u_1\|_{\la_1}^4
+2|\bb|\|u_1\|_{\la_1}^2\|u_2\|_{\la_2}^2\int_{\Om}u_1^2u_2^2+\mu_1\|u_2\|_{\la_2}^4}{\mu_1\mu_2-|\bb|^2(\int_{\Om}u_1^2 u_2^2)^2}
&\hbox{if $\vec{u}\in \mathcal{M}^{\ast\ast}_\bb$},\\
+\iy &\hbox{if $\vec{u}\in \mathcal{M}^\ast\setminus\mathcal{M}^{\ast\ast}_\bb$}.\end{cases}$$
By the Sobolev inequality
\be\label{eq2-10}\|u\|_{\la_i}^2\ge C |u|_4^2,\quad\forall\, u\in H_0^1(\Om),\,\,\,i=1, 2,\ee
where $C$ is a positive constant, it is easy to check that $J_\bb$ is continuous on $\mathcal{M}^\ast$ and $\inf_{\mathcal{M}^\ast}J_\bb\ge C_1>0$ for
some constant $C_1$ independent of $\bb<0$.
Moreover, $J_\bb\in C^1(\mathcal{M}^{\ast\ast}_\bb, \,(0, +\iy))$,
and since any $\vec{u}\in \mathcal{M}_\bb$ is an interior point of $\mathcal{M}_\bb^{\ast\ast}$, a direct computation and (\ref{eq2-5}) yield that
{\allowdisplaybreaks
\begin{align}
\label{eq2-11}&J_\bb'(\vec{u})(\vp, 0)=t_1(\vec{u})\int_{\Om}(\nabla u_1\nabla\vp+\la_1 u_1\vp)+t_1(\vec{u})t_2(\vec{u})|\bb|\int_{\Om}u_1 u_2^2\vp,\\
\label{eq2-11-1}&J_\bb'(\vec{u})(0,\psi)=t_2(\vec{u})\int_{\Om}(\nabla u_2\nabla\psi+\la_2 u_2\psi)+t_1(\vec{u})t_2(\vec{u})|\bb|\int_{\Om}u_1^2 u_2\psi
\end{align}
}%
hold for any $\vec{u}\in \mathcal{M}_\bb$ and $\vp,\, \psi\in H_0^1(\Om)$
(remark that (\ref{eq2-11})-(\ref{eq2-11-1}) do not hold for $\vec{u}\in\mathcal{M}_\bb^{\ast\ast}\setminus\mathcal{M}_\bb$). Note that
Lemmas \ref{lemma1} and \ref{lemma2} yield
\be\label{eq2-9}J_\bb (u_1, u_2)=\sup_{t_1, t_2\ge 0}E_\bb \left(\sqrt{t_1}u_1, \,\sqrt{t_2} u_2\right),\quad \forall\,(u_1, u_2)\in \mathcal{M}.\ee
 To obtain nontrivial solutions of (\ref{eq2}),
we turn to study the new functional $J_\bb$ restricted to $\mathcal{M}_\bb$, which is a problem with two constraints.

\br\label{remark} To obtain nontrivial solutions of (\ref{eq2}), in many papers (see \cite{CZ, DWW, LW1, LW2, S, WW2} for example), people usually turn to study nontrivial critical
points of $E_\bb$ under the following Nehari manifold type constraint
$$\big\{(u_1, u_2)\in \widetilde{H}\,:\, E_\bb'(u_1, u_2)(u_1, 0)=E_\bb'(u_1, u_2)(0, u_2)=0\big\},$$
which is actually a natural constraint for any $\bb<\sqrt{\mu_1\mu_2}$ (see \cite[Proposition 1.1]{S} for example).
To the best of our knowledge, our natural idea (i.e., to obtain nontrivial solutions of (\ref{eq2}) by studying $J_\bb|_{\mathcal{M}_\bb}$),
has never been introduced for (\ref{eq2}) in the literature.
\er

In the following, we always let $(i, j)=(1, 2)$ or $(i, j)=(2, 1)$. Recall that $t_i(\vec{u})$ is well defined for $\vec{u}\in\mathcal{M}_\bb^\ast$.
 For any $\vec{u}=(u_1, u_2)\in \mathcal{M}_\bb^*$, let $\tilde{w}_i\in H_0^1(\Om)$ be the unique
solution of the following linear problem
\be\label{eq2-012}-\Delta \tilde{w}_i+\la_i \tilde{w}_i+|\bb|t_j(\vec{u})u_j^2 \tilde{w}_i=\mu_i t_i(\vec{u})u_i^3,\quad \tilde{w}_i\in H_0^1(\Om).\ee
Since $|u_i|_4 > 1/2$, so $\tilde{w}_i\neq 0$ and
$$\int_{\Om}u_i^3 \tilde{w}_i=\frac{1}{\mu_i t_i(\vec{u})}\left(\|\tilde{w}_i\|_{\la_i}^2+|\bb|t_j(\vec{u})\int_{\Om}u_j^2 \tilde{w}_i^2\right)>0.$$
Define
\be\label{eq2-12}w_i=\al_i \tilde{w}_i,\quad\hbox{where}\,\,\al_i=\frac{1}{\int_{\Om}u_i^3 \tilde{w}_i}>0.\ee
Then $w_i$ is the unique solution of the following problem
\be\label{eq2-13}\begin{cases}
-\Delta w_i+\la_i w_i+|\bb|t_j(\vec{u})u_j^2 w_i=\al_i\mu_i t_i(\vec{u})u_i^3,\quad w_i\in H_0^1(\Om),\\
\int_{\Om}u_i^3 w_i\,dx=1.
\end{cases}\ee
Now we define an operator $K=(K_1, K_2) : \mathcal{M}_\bb^\ast\to H$ by
\be\label{eq2-14}K(\vec{u})=(K_1(\vec{u}), K_2(\vec{u})):=\vec{w}=(w_1, w_2).\ee
Define the transformations
\be\label{involution}\sg_i: H\to H\quad\hbox{by}\quad \sg_1(u_1, u_2):=(-u_1, u_2),\,\,\,\sg_2(u_1, u_2):=(u_1, -u_2).\ee
Then it is easy to check that
\be\label{eq2-15}K(\sg_i(\vec{u}))=\sg_i(K(\vec{u})),\quad i=1, 2.\ee

\bl\label{lemma3} $K\in C^1(\mathcal{M}_\bb^\ast, H)$.\el

\begin{proof} It suffices to apply the Implicit Theorem to the $C^1$ map
{\allowdisplaybreaks
\begin{align*}
&\Psi : \mathcal{M}_\bb^\ast\times H_0^1(\Om)\times \R\to H_0^1(\Om)\times\R, \quad\hbox{where}\\
&\Psi(\vec{u}, v, \al)=\left(v+(-\Delta+\la_i)^{-1}\left(|\bb|t_j(\vec{u})u_j^2 v-\al \mu_i t_i(\vec{u})u_i^3\right),\,\,\int_{\Om}u_i^3v -1\right).
\end{align*}
}%
Note that (\ref{eq2-13}) holds if and only if $\Psi(\vec{u}, w_i, \al_i)=(0, 0)$. By computing the derivative of $\Psi$ with respect to $(v, \al)$ at the
point $(\vec{u}, w_i, \al_i)$ in the direction $(\bar{w}, \bar{\al})$, we obtain a map $\Phi: H_0^1(\Om)\times \R\to H_0^1(\Om)\times\R$ given by
{\allowdisplaybreaks
\begin{align*}
\Phi(\bar{w}, \bar{\al}):=& D_{v, \al}\Psi(\vec{u}, w_i, \al_i)(\bar{w}, \bar{\al})\\
=& \left(\bar{w}+(-\Delta+\la_i)^{-1}
\left(|\bb|t_j(\vec{u})u_j^2 \bar{w}-\bar{\al} \mu_i t_i(\vec{u})u_i^3\right),\,\,\int_{\Om}u_i^3\bar{w}\,dx\right).
\end{align*}
}%
If $\Phi(\bar{w}, \bar{\al})=(0, 0)$, then we multiply the equation
$$-\Delta \bar{w}+\la_i \bar{w}+|\bb|t_j(\vec{u})u_j^2 \bar{w}=\bar{\al} \mu_i t_i(\vec{u})u_i^3$$
by $\bar{w}$ and obtain
$$\|\bar{w}\|_{\la_i}^2\le \bar{\al} \mu_i t_i(\vec{u})\int_{\Om}u_i^3\bar{w}\,dx=0.$$
So $\bar{w}=0$ and then $\bar{\al} \mu_i t_i(\vec{u})u_i^3\equiv 0$ in $\Om$. Since $\mu_i>0, t_i(\vec{u})>0$ and $|u_i|_4\ge 1/2$, we see that $\bar{\al}=0$.
Hence $\Phi$ is injective.

On the other hand, for any $(f, c)\in H_0^1(\Om)\times \R$, let $v_1, v_2\in H_0^1(\Om)$ be solutions of the linear problems
{\allowdisplaybreaks
\begin{align*}
&-\Delta v_1+\la_i v_1+|\bb|t_j(\vec{u})u_j^2 v_1=(-\Delta +\la_i) f,\\
&-\Delta v_2+\la_i v_2+|\bb|t_j(\vec{u})u_j^2 v_2=\mu_i t_i(\vec{u})u_i^3.
\end{align*}
}%
Since $|u_i|_4>1/2$, so $v_2\neq 0$ and then $\int_{\Om}u_i^3 v_2\,dx>0$.
Let $\al_0=(c-\int_{\Om}u_i^3 v_1\,dx)/\int_{\Om}u_i^3 v_2\,dx$, then
$\Phi(v_1+\al_0 v_2, \al_0)=(f, c)$. Hence $\Phi$ is surjective, that is, $\Phi$ is a bijective map. This completes the proof.\end{proof}

\bl\label{lemma4}Assume that $\{\vec{u}_n=(u_{n, 1}, u_{n, 2}) : n\ge 1\}\subset \mathcal{M}_\bb$ is bounded in $H$
and $\vec{u}_n\rightharpoonup \vec{u}=(u_1, u_2)\in \mathcal{M}_\bb$ weakly in $H$. Then there exists $\vec{w}\in H$ such that, up to a subsequence,
$\vec{w}_n:=K(\vec{u}_n)\to \vec{w}$ strongly in $H$.\el

\begin{proof} Recall the definition of $\mathcal{M}_\bb$ in (\ref{eq2-8}),
we deduce from (\ref{eq2-5}) and (\ref{eq2-10}) that there exists $C_0>0$ independent of $\vec{u}\in\mathcal{M}_\bb$ such that
\begin{align}\label{eq2-16}
t_i(\vec{u})=\frac{\mu_j\|u_i\|_{\la_i}^2+|\bb|\|u_j\|_{\la_j}^2\int_{\Om}u_1^2u_2^2}{\mu_1\mu_2-|\bb|^2(\int_{\Om}u_1^2 u_2^2)^2}
\ge \frac{1}{\mu_i}\|u_i\|_{\la_i}^2\ge C_0, \quad\forall\, \vec{u}\in\mathcal{M}_\bb.
\end{align}

Since $\vec{u}_n\rightharpoonup \vec{u}=(u_1, u_2)\in \mathcal{M}_\bb$ weakly in $H$, so up to a subsequence,
$u_{n, i}\to u_i$ strongly in $L^{4}(\Om)$. Then
$$\lim_{n\to\iy}\left(\mu_1\mu_2-|\bb|^2\left(\int_{\Om}u_{n,1}^2 u_{n, 2}^2\right)^2\right)=\mu_1\mu_2-|\bb|^2\left(\int_{\Om}u_1^2 u_2^2\right)^2>0,$$
where the assumption $\vec{u}\in \mathcal{M}_\bb$ is used. Hence we may assume that $t_i(\vec{u}_n)$ are uniformly bounded for any $n\ge 1$ and $i=1, 2$, and up to a subsequence, $t_i(\vec{u}_n)\to t_i>0$.
Recall that $w_{n, i}=\al_{n, i}\tilde{w}_{n, i}$, where $\al_{n, i}$ and $\tilde{w}_{n, i}$ are seen in (\ref{eq2-012})-(\ref{eq2-12}).
By (\ref{eq2-012}) we have
$$\|\tilde{w}_{n, i}\|_{\la_i}^2\le \mu_i t_i(\vec{u}_n)\int_{\Om}u_{n, i}^3 \tilde{w}_{n, i}\,dx\le C|\tilde{w}_{n, i}|_4\le C\|\tilde{w}_{n, i}\|_{\la_i},$$
which implies that $\{\tilde{w}_{n, i} : n\ge 1\}$ are bounded in $H_0^1(\Om)$. Up to a subsequence, we may assume that
$\tilde{w}_{n, i}\to \tilde{w}_i$ weakly in $H_0^1(\Om)$ and strongly in $L^4(\Om)$.
Then by (\ref{eq2-012}) and H\"{o}lder inequality we get
{\allowdisplaybreaks
\begin{align*}
&\int_{\Om}\nabla\tilde{w}_{n, i}\nabla(\tilde{w}_{n, i}-\tilde{w}_i)\,dx+\la_i\int_{\Om}\tilde{w}_{n, i}(\tilde{w}_{n, i}-\tilde{w}_i)\,dx\\
=&-|\bb|t_j(\vec{u}_n)\int_{\Om}u_{n, j}^2\tilde{w}_{n, i}(\tilde{w}_{n, i}-\tilde{w}_i)+\mu_i t_i(\vec{u}_n)\int_{\Om}u_{n, i}^3(\tilde{w}_{n, i}-\tilde{w}_i)\,dx\to 0
\end{align*}
}%
as $n\to\iy$. Hence
\be\label{eq2-34}\|\tilde{w}_{n, i}\|_{\la_i}^2=\int_{\Om}(\nabla\tilde{w}_{n, i}\nabla\tilde{w}_i+\la_i \tilde{w}_{n, i}\tilde{w}_i)+o(1)=\|\tilde{w}_i\|_{\la_i}^2+o(1),\ee
that is, $\tilde{w}_{n, i}\to \tilde{w}_i$ strongly in $H_0^1(\Om)$. Again by (\ref{eq2-012}) we know that $\tilde{w}_i$ satisfies
$$-\Delta \tilde{w}_i+\la_i \tilde{w}_i+|\bb|t_ju_j^2 \tilde{w}_i=\mu_i t_i u_i^3.$$
Since $|u_i|_4=1$, so $\tilde{w}_i\neq 0$ and then $\int_{\Om}u_i^3 \tilde{w}_i\,dx>0$, which implies that
$$\lim_{n\to\iy}\al_{n, i}=\lim_{n\to\iy}\frac{1}{\int_{\Om}u_{n,i}^3 \tilde{w}_{n,i}}=\frac{1}{\int_{\Om}u_i^3 \tilde{w}_i}=:\al_i.$$
Therefore, $w_{n, i}=\al_{n, i}\tilde{w}_{n, i}\to \al_i\tilde{w_i}=: w_i$ strongly in $H_0^1(\Om)$.\end{proof}

To continue our proof, we need to use {\it vector genus} introduced by \cite{TT} to define proper minimax energy levels.
Recall (\ref{involution}) and (\ref{eq2-8}), as in \cite{TT} we consider the class of sets
$$\mathcal{F}=\{A\subset \mathcal{M} : A\,\,\hbox{ is closed and}\,\,\sg_i(\vec{u})\in A\,\,\forall\,\vec{u}\in A,\,\,i=1, 2\},$$
and, for each $A\in \mathcal{F}$ and $k_1, k_2\in\mathbb{N}$, the class of functions
$$F_{(k_1, k_2)}(A)=\left\{f=(f_1, f_2): A\to \prod_{i=1}^2\R^{k_i-1} : \begin{array}{lll}  f_i : A\to \R^{k_i-1}\,\,\hbox{continuous,}\\
 f_i(\sg_i(\vec{u}))=-f_i(\vec{u})\,\,\hbox{for each}\,\,i,\\
 f_i(\sg_j(\vec{u}))=f_i(\vec{u})\,\,\hbox{for}\,\,j\neq i
\end{array} \right\}.$$
Here, we denote $\R^0:=\{0\}$. Let us recall vector genus from \cite{TT}.
\begin{definition}\label{definition1} (Vector genus, see \cite{TT})
Let $A\in \mathcal{F}$ and take any $k_1, k_2\in\mathbb{N}$. We say that $\vec{\ga}(A)\ge (k_1, k_2)$ if for every $f\in F_{(k_1, k_2)}(A)$ there exists $\vec{u}\in A$ such that $f(\vec{u})=(f_1(\vec{u}), f_2(\vec{u}))=(0, 0)$. We denote
$$\Gamma^{(k_1, k_2)}:=\{A\in\mathcal{F} : \vec{\ga}(A)\ge (k_1, k_2)\}.$$
\end{definition}

\bl\label{lemma5}(see \cite{TT}) With the previous notations, the following properties hold.
\begin{itemize}
\item[$(i)$] Take $A_1\times A_2\subset \mathcal{M}$ and let $\eta_i: S^{k_i-1}:=\{x\in \R^{k_i} : |x|=1\}\to A_i$ be a homeomorphism such that
$\eta_i(-x)=-\eta_i(x)$ for every $x\in S^{k_i-1}$, $i=1, 2$. Then $A_1\times A_2\in \Gamma^{(k_1, k_2)}$.

\item[$(ii)$] We have $\overline{\eta(A)}\in \Gamma^{(k_1, k_2)}$ whenever $A\in \Gamma^{(k_1, k_2)}$ and a continuous map $\eta : A\to \mathcal{M}$ is such
that $\eta\circ \sg_i=\sg_i\circ\eta,\,\,\forall\,i=1, 2$.
\end{itemize}\el

To obtain sign-changing solutions, as in many references such as \cite{CMT, BLT, Zou}, we should use cones of positive functions.
Precisely, we define
\be\label{cone}\mathcal{P}_i:=\{\vec{u}=(u_1, u_2)\in H : u_i\ge 0\},\quad \mathcal{P}:=\bigcup_{i=1}^2 (\mathcal{P}_i\cup -\mathcal{P}_i).\ee
Moreover, for $\dd>0$ we define
$\mathcal{P}_\dd:=\{\vec{u}\in H \,:\,\hbox{dist}_4(\vec{u}, \mathcal{P})<\dd \}$,
where
{\allowdisplaybreaks
\begin{align}\label{cone1}&\hbox{dist}_4(\vec{u}, \mathcal{P}):=\min\big\{\hbox{dist}_4(u_i,\,\mathcal{P}_i),\,\,\hbox{dist}_4(u_i,\,-\mathcal{P}_i),\quad i=1, 2\big\},\\
&\hbox{dist}_4(u_i,\,\pm\mathcal{P}_i ):=\inf\{|u_i-v|_4 \,\,:\,\, v\in \pm\mathcal{P}_i\}.\nonumber
\end{align}
}%
Denote $u^{\pm}:=\max\{0, \pm u\}$, then it is easy to check that $\hbox{dist}_4(u_i, \pm\mathcal{P}_i)=|u_i^{\mp}|_4$.

\bl\label{lemma6}Let $k_1, k_2\ge 2$. Then for any $\dd<2^{-1/4}$ and any $A\in\Gamma^{(k_1, k_2)}$ there holds $A\setminus \mathcal{P}_\dd\neq\emptyset$.\el

\begin{proof} Fix any $A\in\Gamma^{(k_1, k_2)}$. Consider
\be\label{eq2-36} f=(f_1, f_2) : A\to \R^{k_1-1}\times\R^{k_2-1},\quad f_i(\vec{u})=\left(\int_{\Om}|u_i|^3u_i\,dx, 0,\cdots, 0\right).\ee
Clearly $f\in F_{(k_1, k_2)}(A)$, so there exists $\vec{u}\in A$ such that $f(\vec{u})=0$. Note that $\vec{u}\in A\subset \mathcal{M}$, we conclude that
$$\int_{\Om}(u_i^{+})^4\,dx=\int_{\Om}(u_i^-)^4\,dx=1/2,\quad\hbox{for $i=1,2$},$$
that is, $\hbox{dist}_4(\vec{u}, \mathcal{P})=2^{-1/4}$, and so $\vec{u}\in A\setminus \mathcal{P}_\dd$ for every $\dd<2^{-1/4}$.\end{proof}

\bl\label{lemma7}  There exist $A\in \Gamma^{(k_1, k_2)}$ and a positive constant $c^{k_1, k_2}\in\mathbb{N}$ independent of $\bb<0$ such that
$\sup_A J_\bb\le c^{k_1, k_2}$ for any $\bb<0$. \el

\begin{proof} Take nonempty open subsets $B_1, B_2\subset\Om$ such that $B_1\cap B_2=\emptyset$.
Let $\{\vp^i_{k} : 1\le k\le k_i\}\subset H_0^1(B_i)$ be linearly independent subsets, and define
$$A_i:=\big\{u\in \hbox{span}\{\vp^i_{1},\cdots, \vp_{k_i}^i\} \,:\, |u|_4=1\big\}.$$
Clearly there exists an odd homeomorphism from $S^{k_i-1}$ to $A_i$. By Lemma \ref{lemma5}-$(i)$ one has $A:=A_1\times A_2\in \Gamma^{(k_1, k_2)}$.
For any $\vec{u}=(u_1, u_2)\in A$, since $u_i\in H_0^1(B_i)$, so $u_1\cdot u_2\equiv 0$, which implies $\vec{u}\in\mathcal{M}_\bb$ and
$$J_\bb (\vec{u})=\frac{1}{4\mu_1\mu_2}(\mu_2\|u_1\|_{\la_1}^4+
\mu_1\|u_2\|_{\la_2}^4).$$
Since all norms of a finite dimensional linear space are equivalent, so there exists $C_{k_i}>0$ such that
$\|u_i\|_{\la_i}\le C_{k_i}|u_i|_4=C_{k_i}$ for any $u_i\in A_i$.
Hence there exists $c^{k_1, k_2}\in\mathbb{N}$ independent of $\bb<0$ such that
$\sup_{A}J_\bb(\vec{u})\le c^{k_1, k_2}$ for any $\bb<0$.
\end{proof}

For every $k_1, k_2\ge 2$ and $0<\dd < 2^{-1/4}$, we define
\be\label{eq2-17}c_{\bb,\dd}^{k_1, k_2}:=\inf_{A\in \Gamma_\bb^{(k_1, k_2)}}\sup_{\vec{u}\in A\setminus \mathcal{P}_\dd}J_\bb(\vec{u}),\ee
where
\be\label{eq2-17-1}\Gamma_\bb^{(k_1, k_2)}:=\left\{A\in \Gamma^{(k_1, k_2)} \,:\, \sup_{A}J_\bb< c^{k_1, k_2}+1\right\}.\ee
Lemma \ref{lemma7} yields $\Gamma_\bb^{(k_1, k_2)}\neq \emptyset$ and so $c_{\bb, \dd}^{k_1, k_2}$ is well defined. Moreover,
$$c_{\bb, \dd}^{k_1, k_2}\le c^{k_1, k_2}\quad\hbox{for every}\,\,\bb<0\,\,\hbox{and}\,\,\dd>0. $$
Recall that $\inf_{\mathcal{M}}J_\bb\ge C_1$, so $c_{\bb, \dd}^{k_1, k_2}\ge C_1>0$.
We will prove that $c_{\bb, \dd}^{k_1, k_2}$ is a critical value of $E_\bb$ provided that $\dd>0$ is sufficiently small.
As we will see in Remark \ref{rmk4}, we can not replace $\Gamma_\bb^{(k_1, k_2)}$ by $\Gamma^{(k_1, k_2)}$ in the definition of $c_{\bb,\dd}^{k_1, k_2}$.

\bl\label{lemma8}For any sufficiently small $\dd\in (0, 2^{-1/4})$, there holds
$$\hbox{dist}_4(K(\vec{u}), \mathcal{P})<\dd/2,\quad\forall\,\vec{u}\in\mathcal{M},\,J_\bb(\vec{u})\le c^{k_1, k_2}+1,\,
\hbox{dist}_4(\vec{u}, \mathcal{P})<\dd.$$\el

\begin{proof} Assume by contradiction that there exist $\dd_n\to 0$ and $\vec{u}_n=(u_{n, 1}, u_{n, 2})\in\mathcal{M}$
such that $J_\bb (\vec{u}_n)\le c^{k_1, k_2}+1$,
$\hbox{dist}_4(\vec{u}_n, \mathcal{P})<\dd_n$ and $\hbox{dist}_4(K(\vec{u}_n), \mathcal{P})\ge\dd_n/2$.
Without loss of generality we may assume that $\hbox{dist}_4(\vec{u}_n, \mathcal{P})=\hbox{dist}_4(u_{n, 1}, \mathcal{P}_1)$.
 Recall the definition of $J_\bb$, we see that
$\vec{u}_n\in \mathcal{M}_\bb$ and
\be\label{eq2-18}c^{k_1, k_2}+1\ge J_\bb (\vec{u}_n)\ge\frac{1}{4}\frac{\mu_2\|u_{n,1}\|_{\la_1}^4
+\mu_1\|u_{n, 2}\|_{\la_2}^4}{\mu_1\mu_2-|\bb|^2\left(\int_{\Om}u_{n,1}^2 u_{n,2}^2\right)^2}.\ee
This implies that $\vec{u}_n$ are uniformly bounded in $H$. Up to a subsequence, we may assume that $\vec{u}_n\to \vec{u}=(u_1, u_2)$ weakly in $H$ and strongly in $L^4(\Om)\times L^4(\Om)$. Hence $|u_i|_4=1$ and $\vec{u}\in \mathcal{M}$. Moreover, since (\ref{eq2-10})
yields $\|u_{n, i}\|_{\la_i}^2\ge C>0$, where $C$ is independent of $n$, so we deduce from (\ref{eq2-18}) that
$$\mu_1\mu_2-|\bb|^2\left(\int_{\Om}u_1^2u_2^2\right)^2=\lim_{n\to\iy}\left(\mu_1\mu_2-|\bb|^2\left(\int_{\Om}u_{n,1}^2 u_{n, 2}^2\right)^2\right)> 0,$$
that is, $\vec{u}\in\mathcal{M}_\bb$. Write $K(\vec{u}_n)=\vec{w}_n=(w_{n, 1}, w_{n, 2})$ and $w_{n, i}=\al_{n, i}\tilde{w}_{n, i}$ as in
the proof of Lemma \ref{lemma4}. Then by the proof of Lemma \ref{lemma4}, we see that $t_i(\vec{u}_n)$ and $\al_{n, i}$ are all uniformly bounded.
Combining this with (\ref{eq2-13}), we deduce that
{\allowdisplaybreaks
\begin{align*}
\hbox{dist}_4(w_{n, 1}, \mathcal{P}_1)|w_{n, 1}^-|_4 &=|w_{n, 1}^-|_4^2\le C\int_{\Om}|\nabla w_{n, 1}^-|^2+\la_1 (w_{n, 1}^-)^2\,dx\\
&\le C\int_{\Om}\left(|\nabla w_{n, 1}^-|^2+\la_1 (w_{n, 1}^-)^2+|\bb|t_2(\vec{u}_n)u_{n, 2}^2 (w_{n, 1}^-)^2\right)\\
&=-C\al_{n, 1}\mu_1 t_1(\vec{u}_n)\int_{\Om}u_{n, 1}^3 w_{n, 1}^-\,dx\\
&\le C\int_{\Om}(u_{n, 1}^-)^3 w_{n, 1}^-\,dx\le C|u_{n, 1}^-|_4^3|w_{n, 1}^-|_4\\
&=C\hbox{dist}_4(u_{n, 1}, \mathcal{P}_1)^3 |w_{n, 1}^-|_4
\le C \dd_n^3|w_{n, 1}^-|_4.
\end{align*}
}%
So $\hbox{dist}_4(K(\vec{u}_n), \mathcal{P})\le \hbox{dist}_4(w_{n, 1}, \mathcal{P}_1)\le C\dd_n^3<\dd_n/2$
holds for $n$ sufficiently large, which is a contradiction. This completes the proof.\end{proof}

Now let us define a map
$$V: \mathcal{M}_\bb^\ast \to H,\quad V(\vec{u}):=\vec{u}-K(\vec{u}).$$
We will prove that $(\sqrt{t_1(\vec{u})}u_1, \sqrt{t_2(\vec{u})}u_2)$ is a nontrivial solution of (\ref{eq2}), if $\vec{u}=(u_1, u_2)\in\mathcal{M}_\bb$ satisfies $V(\vec{u})=0$.

\bl\label{lemma9} Let $\vec{u}_n=(u_{n, 1}, u_{n, 2})\in\mathcal{M}_\bb$ be such that
$$J_\bb(\vec{u}_n)\to c<\iy\quad\hbox{and}\quad V(\vec{u}_n)\to 0\quad\hbox{strongly in $H$}.$$
Then up to a subsequence, there exists $\vec{u}\in \mathcal{M}_\bb$ such that $\vec{u}_n\to \vec{u}$ strongly in $H$ and $V(\vec{u})=0$. \el

\begin{proof} Without loss of generality we may assume that $J_\bb(\vec{u}_n)\le c+1$ for all $n\ge 1$. Then by the proof of Lemma \ref{lemma8}, up to a subsequence, we may assume that $\vec{u}_n\rightharpoonup \vec{u}=(u_1, u_2)\in\mathcal{M}_\bb$ weakly in $H$. By Lemma \ref{lemma4},
there exists $\vec{w}\in H$ such that, up to a subsequence,
$\vec{w}_n:=K(\vec{u}_n)=(w_{n, 1}, w_{n, 2})\to \vec{w}=(w_1, w_2)$ strongly in $H$. Recall $V(\vec{u}_n)\to 0$, we get
{\allowdisplaybreaks
\begin{align*}
\int_{\Om}\nabla u_{n, i}\nabla (u_{n, i}-u_i)=&\int_{\Om}\nabla (w_{n, i}-w_{i})\nabla (u_{n, i}-u_i)+\int_{\Om}\nabla w_{i}\nabla (u_{n, i}-u_i)\\
&+\int_{\Om}\nabla (u_{n, i}-w_{n, i})\nabla (u_{n, i}-u_i)=o(1).
\end{align*}
}%
Then similarly as (\ref{eq2-34}) we see that $\vec{u}_n\to \vec{u}$ strongly in $H$. By Lemma \ref{lemma3}
we have $V(\vec{u})=\lim_{n\to\iy} V(\vec{u}_n)=0$.\end{proof}

\bl\label{lemma10}Recall $C_0>0$ in (\ref{eq2-16}). Then
$$J_\bb'(\vec{u})[V(\vec{u})]\ge C_0\|V(\vec{u})\|_H^2,\quad \hbox{for any}\,\, \vec{u}\in\mathcal{M}_\bb.$$
\el

\begin{proof} Fix any $\vec{u}=(u_1, u_2)\in\mathcal{M}_\bb$, write $\vec{w}=K(\vec{u})=(w_1, w_2)$ as above,
then $V(\vec{u})=(u_1-w_1, u_2-w_2)$. By (\ref{eq2-13}) we have
$\int_{\Om}u_i^3(u_i-w_i)\,dx=1-1=0$.
Then we deduce from (\ref{eq2-11})-(\ref{eq2-11-1}), (\ref{eq2-13}) and (\ref{eq2-16}) that
{\allowdisplaybreaks
\begin{align*}
&J_\bb'(\vec{u})[V(\vec{u})]\\
=&\sum_{i=1}^2t_i(\vec{u})\int_{\Om}\left(\nabla u_i\nabla (u_i-w_i)+\la_i u_i(u_i-w_i)+t_j(\vec{u})|\bb|u_i(u_i-w_i)u_j^2\right)\,dx\\
\ge &\sum_{i=1}^2t_i(\vec{u})\int_{\Om}\left(\nabla u_i\nabla (u_i-w_i)+\la_i u_i(u_i-w_i)+t_j(\vec{u})|\bb|w_i(u_i-w_i)u_j^2\right)\,dx\\
=&\sum_{i=1}^2t_i(\vec{u})\int_{\Om}\big(\nabla u_i\nabla (u_i-w_i)+\la_i u_i(u_i-w_i)-\nabla w_i\nabla(u_i-w_i)\\
&\qquad\qquad\qquad-\la_i w_i(u_i-w_i)+\al_i\mu_i t_i(\vec{u})u_i^3(u_i-w_i)\big)\,dx\\
=&\sum_{i=1}^2t_i(\vec{u})\int_{\Om}|\nabla (u_i-w_i)|^2+\la_i |u_i-w_i|^2\,dx\ge C_0\|V(\vec{u})\|_H^2.
\end{align*}
}%
This completes the proof.\end{proof}

\bl\label{lemma11} There exists a unique global solution $\eta=(\eta_1, \eta_2) : [0, \iy)\times \mathcal{M}_\bb \to H$ for the initial value problem
\be\label{eq2-19}\frac{d}{dt}\eta(t,\vec{u})=-V(\eta(t, \vec{u})), \quad \eta(0, \vec{u})=\vec{u}\in\mathcal{M}_\bb.\ee
Moreover,

\begin{itemize}
\item[$(i)$] $\eta(t, \vec{u})\in \mathcal{M}_\bb$ for any $ t>0$ and $u\in\mathcal{M}_\bb$.
\item[$(ii)$] $\eta(t, \sg_i(\vec{u}))=\sg_i(\eta(t, \vec{u}))$ for any $t>0$, $u\in\mathcal{M}_\bb$ and $i=1, 2$.
\item[$(iii)$] For every $\vec{u}\in\mathcal{M}_\bb$, the map $t\mapsto J_\bb(\eta(t, \vec{u}))$ is non-increasing.
\item[$(iv)$] There exists $\dd_0\in (0, 2^{-1/4})$ such that, for every $\dd<\dd_0$, there holds
$$\eta(t, \vec{u})\in \mathcal{P}_\dd \quad\hbox{whenever}\,\,u\in \mathcal{M}_\bb\cap \mathcal{P}_\dd, \, J_\bb(u)\le c^{k_1, k_2}+1\,\,\hbox{and}\,\,t>0.$$
\end{itemize}

\el

\begin{proof} Recalling Lemma \ref{lemma3}, one has $V(\vec{u})\in C^1(\mathcal{M}_\bb^\ast, H)$.
Since $\mathcal{M}_\bb\subset\mathcal{M}_\bb^\ast$ and $\mathcal{M}_\bb^\ast$ is open, so (\ref{eq2-19}) has a unique solution
$\eta: [0, T_{\max})\times\mathcal{M}_\bb\to H$,
where $T_{\max}>0$ is the maximal time such that $\eta(t, \vec{u})\in \mathcal{M}_\bb^\ast$ for all
$t\in [0, T_{\max})$ (Note that $V(\cdot)$ is defined only on $\mathcal{M}_\bb^\ast$). We should prove $T_{\max}=+\iy$ for any $\vec{u}\in \mathcal{M}_\bb$.
Fix any $\vec{u}=(u_1, u_2)\in \mathcal{M}_\bb$, one has
{\allowdisplaybreaks
\begin{align*}
\frac{d}{dt}\int_{\Om}\eta_i(t, \vec{u})^4\,dx &=-4\int_{\Om}\eta_i(t, \vec{u})^3(\eta_i(t, \vec{u})-K_i(\eta(t, \vec{u})))\,dx\\
&=4-4\int_{\Om}\eta_i(t, \vec{u})^4\,dx,\quad \forall\, 0< t<T_{\max},
\end{align*}
}%
that is
$$\frac{d}{dt}\left[e^{4t}\left(\int_{\Om}\eta_i(t, \vec{u})^4\,dx-1\right)\right]=0.$$
Recalling $\int_{\Om}\eta_i(0, \vec{u})^4\,dx=\int_{\Om}u_i^4\,dx=1$, we see that
$$\int_{\Om}\eta_i(t, \vec{u})^4\,dx\equiv 1
\quad \hbox{for all}\,\, 0\le t<T_{\max}.$$
So $\eta(t, \vec{u})\in \mathcal{M}$, that is $\eta(t, \vec{u})\in\mathcal{M}\cap\mathcal{M}_\bb^\ast=\mathcal{M}_\bb$ for all $t\in [0, T_{\max})$.
Assume by contradiction that $T_{\max}<+\iy$, then either $\eta(T_{\max}, \vec{u})\in\mathcal{M}\setminus\mathcal{M}_\bb^\ast$ or
$\lim_{t\to T_{\max}}\|\eta(t, \vec{u})\|_H=+\iy$. If $\eta(T_{\max}, \vec{u})\in\mathcal{M}\setminus\mathcal{M}_\bb^\ast$,
then the definition of $J_\bb$ yields
$J_\bb (\eta(T_{\max}, \vec{u}))=+\iy$.
Since $\eta(t,\vec{u})\in \mathcal{M}_\bb$ for any $t\in [0, T_{\max})$, we deduce from Lemma \ref{lemma10} that
{\allowdisplaybreaks
\begin{align}\label{eq2-20}
J_\bb\left(\eta\left(T_{\max}, \vec{u}\right)\right)&=J_\bb(\eta(0, \vec{u}))+\int_0^{T_{\max}}\frac{d}{dt}J_\bb(\eta(t, \vec{u}))\,dt\nonumber\\
&=J_\bb(\vec{u})-\int_0^{T_{\max}} J_\bb'(\eta(t, \vec{u}))[V(\eta(t, \vec{u}))]\,dt\nonumber\\
&\le J_\bb(\vec{u})-C_0\int_0^{T_{\max}} \|V(\eta(t, \vec{u}))\|_H^2\,dt\le J_\bb(\vec{u})<+\iy,
\end{align}
}%
a contradiction. So $\lim_{t\to T_{\max}}\|\eta(t, \vec{u})\|_H=+\iy$. Similarly as (\ref{eq2-20}), we see that $J_\bb(\eta(t, \vec{u}))\le J_\bb(\vec{u})<+\iy$ for all $t\in [0, T_{\max})$,
and so
$$\frac{1}{4\mu_1\mu_2}\left(\mu_2\|\eta_1(t, \vec{u})\|_{\la_1}^4+
\mu_1\|\eta_2(t, \vec{u})\|_{\la_2}^4\right)\le J_\bb (\eta(t, \vec{u}))\le J_\bb(\vec{u})<+\iy,$$
which means that $\|\eta(t, \vec{u})\|_H^2$ are uniformly bounded for all $[0, T_{\max})$, also a contradiction. Hence $T_{\max}=+\iy$ and $(i), (iii)$ hold.

By (\ref{eq2-15}) we have $V(\sg_i(\vec{u}))=\sg_i(V(\vec{u}))$. Then by the uniqueness of solutions of the initial value
problem (\ref{eq2-19}), it is easy to check that
$(ii)$ holds.

Finally, let $\dd_0\in (0, 2^{-1/4})$ such that Lemma \ref{lemma8} holds for every $\dd<\dd_0$. For any $\vec{u}\in\mathcal{M}_\bb$ with
$J_\bb (\vec{u})\le c^{k_1, k_2}+1$ and $\hbox{dist}_4(\vec{u}, \mathcal{P})=\dd<\dd_0$, since
$$\eta(t, \vec{u})=\vec{u}+t\frac{d}{dt}{\eta}(0, \vec{u})+o(t)=\vec{u}-t V(\vec{u})+o(t)=(1-t)\vec{u}+tK(\vec{u})+o(t),$$
so we see from Lemma \ref{lemma8} that
{\allowdisplaybreaks
\begin{align*}
\hbox{dist}_4(\eta(t, \vec{u}), \mathcal{P})&=\hbox{dist}_4((1-t)\vec{u}+tK(\vec{u})+o(t), \mathcal{P})\\
&\le(1-t)\hbox{dist}_4(\vec{u}, \mathcal{P})+t\hbox{dist}_4(K(\vec{u}), \mathcal{P})+o(t)\\
&\le(1-t)\dd+t\dd/2+o(t)<\dd
\end{align*}
}%
for $t>0$ sufficiently small. Hence $(iv)$ holds.\end{proof}

Now we are in a position to prove Theorem \ref{th1}.

\begin{proof}[Proof of Theorem \ref{th1}.]
{\bf Step 1.} Take any $\dd\in (0, \dd_0)$.  We prove that (\ref{eq2}) has a sign-changing solution $(\tilde{u}_1, \tilde{u}_2)\in H$ such that
$E_\bb (\tilde{u}_1, \tilde{u}_2)=c_{\bb, \dd}^{k_1, k_2}$.

Write $c_{\bb, \dd}^{k_1, k_2}$ simply by $c$ in this step. We claim that
there exists a sequence $\{\vec{u}_n :n\ge 1\}\subset \mathcal{M}_\bb$ such that
\be\label{eq2-21}J_\bb(\vec{u}_n)\to c,\,\, V(\vec{u}_n)\to 0\,\,\,\hbox{as $n\to\iy$,}\,\,\,
\hbox{and}\,\, \hbox{dist}_4(\vec{u}_n, \mathcal{P})\ge \dd,\,\,\,\forall\,n\in\mathbb{N}.\ee

If (\ref{eq2-21}) does not hold, there exists small $\e\in (0, 1)$ such that
$$\|V(\vec{u})\|_H^2\ge \e,\quad\forall\,u\in\mathcal{M}_\bb,\,\,|J_\bb(\vec{u})-c|\le 2\e,\,\,\hbox{dist}_4(\vec{u}, \mathcal{P})\ge \dd.$$
Recall the definition of $c$ in (\ref{eq2-17}), there exists $A\in \Gamma_\bb^{(k_1, k_2)}$ such that
$$\sup_{A\setminus \mathcal{P}_\dd}J_\bb<c+\e.$$
Since $\sup_{A} J_\bb<c^{k_1, k_2}+1$, so $A\subset \mathcal{M}_\bb$. Then we can consider $B=\eta(2/C_0, A)$, where $\eta$ is in Lemma \ref{lemma11} and $C_0$ is in (\ref{eq2-16}). By Lemma \ref{lemma5}-$(ii)$ and Lemma \ref{lemma11}-$(ii)$ we have $B\in \Gamma^{(k_1, k_2)}$. Again by Lemma \ref{lemma11}-$(iii)$, we have
$\sup_{B} J_\bb\le\sup_A J_\bb<c^{k_1, k_2}+1$, that is $B\in \Gamma_\bb^{(k_1, k_2)}$ and so
$\sup_{B\setminus \mathcal{P}_\dd}J_\bb\ge c$.
Then by Lemma \ref{lemma6} we can take $\vec{u}\in A$ such that $\eta(2/C_0, \vec{u})\in B\setminus \mathcal{P}_\dd$ and
$$c-\e\le \sup_{B\setminus \mathcal{P}_\dd}J_\bb-\e< J_\bb (\eta(2/C_0, \vec{u})).$$
Since $J_{\bb}(\eta(t, \vec{u}))\le J_\bb(\vec{u})<c^{k_1, k_2}+1$ for any $t>0$,
Lemma \ref{lemma11}-$(iv)$ yields $\eta(t, \vec{u})\not\in\mathcal{P}_\dd$ for any $t\in [0, 2/C_0]$.
In particular, $\vec{u}\not\in\mathcal{P}_\dd$ and so $J_\bb(\vec{u})< c+\e$. Then
for any $t\in [0, 2/C_0]$, we have
$$c-\e< J_\bb (\eta(2/C_0, \vec{u}))\le J_\bb (\eta(t, \vec{u}))\le J_\bb (\vec{u})< c+\e,$$
which implies $\|V(\eta(t, \vec{u}))\|_H^2\ge \e$ and
\begin{align*}
\frac{d}{dt}J_\bb (\eta(t, \vec{u}))=-J_\bb'(\eta(t, \vec{u}))[V(\eta(t, \vec{u}))]\le-C_0\|V(\eta(t, \vec{u}))\|_H^2\le-C_0 \e
\end{align*}
for every $t\in [0, 2/C_0]$. Hence,
$$c- \e< J_\bb (\eta(2/C_0, \vec{u}))\le J_\bb(\vec{u})-\int_{0}^{2/C_0}C_0\e\,dt<c+\e-2\e=c-\e,$$
a contradiction. Therefore (\ref{eq2-21}) holds, and by Lemma \ref{lemma9}, up to a subsequence,
there exists $\vec{u}=(u_1, u_2)\in \mathcal{M}_\bb$ such that $\vec{u}_n\to \vec{u}$ strongly in $H$ and $V(\vec{u})=0$, $J_\bb(\vec{u})=c=c_{\bb, \dd}^{k_1, k_2}$. Since $\hbox{dist}_4(\vec{u}_n, \mathcal{P})\ge \dd$, so
$\hbox{dist}_4(\vec{u}, \mathcal{P})\ge \dd$, which implies that both $u_1$ and $u_2$ are sign-changing.

Since $V(\vec{u})=0$, so $\vec{u}=K(\vec{u})$. Combining this with (\ref{eq2-13})-(\ref{eq2-14}), we see that $\vec{u}$ satisfies
\be\label{eq2-22-1}\begin{cases}
-\Delta u_1+\la_1 u_1+|\bb|t_2(\vec{u})u_2^2 u_1=\al_1\mu_1 t_1(\vec{u})u_1^3,\\
-\Delta u_2+\la_2 u_2+|\bb|t_1(\vec{u})u_1^2 u_2=\al_2\mu_2 t_2(\vec{u})u_2^3.
\end{cases}\ee
Recall that $|u_i|_4=1$ and $t_i(\vec{u})$ satisfies (\ref{eq2-4}). Multiplying (\ref{eq2-22-1}) by $u_i$ and integrating over $\Om$, we obtain that $\al_1=\al_2=1$.
Again by (\ref{eq2-22-1}), we see that $(\tilde{u}_1, \tilde{u}_2):=(\sqrt{t_1(\vec{u})}u_1, \sqrt{t_2(\vec{u})}u_2)$ is a sign-changing solution of the original problem (\ref{eq2}). Moreover, (\ref{eq2-6}) and (\ref{eq2-9}) yield
$$E_\bb (\tilde{u}_1, \tilde{u}_2)=J_\bb (u_1, u_2)=c_{\bb, \dd}^{k_1, k_2}.$$
This completes the proof of Step 1.

{\bf Step 2.} We prove that (\ref{eq2}) has infinitely many sign-changing solutions $(u_{n, 1}, u_{n, 2})$ such that
\be\label{eq2-24}\|u_{n, 1}\|_{L^\iy(\Om)}+\|u_{n,2}\|_{L^\iy(\Om)}\to+\iy\quad\hbox{as}\,\,\,n\to+\iy.\ee

It suffices to prove that
\be\label{eq2-23}\lim_{k_1\to\iy}\inf_{0<\dd\le 2^{-5/4}}c_{\bb, \dd}^{k_1, k_2}=+\iy.\ee

Assume by contradiction that there exist $k_1^n\to\iy$, $\dd_n\in (0, 2^{-5/4}]$ and a positive constant $C$ such that
$c_{\bb, \dd_n}^{k_1^n, k_2}\le C$ for every $n\in\mathbb{N}$.
Then there exists $A_n\in \Gamma_\bb^{(k_1^n, k_2)}$ such that
$$\sup_{A_n\setminus\mathcal{P}_{\dd_n}} J_\bb\le C+1,\quad\forall\,n\in\mathbb{N}.$$

Let $\{\vp_{k}\}_k\subset H_0^1(\Om)$ be the sequence of eigenfunctions of $(-\Delta, H_0^1(\Om))$ associated to the eigenvalues $\{\La_k\}_k$, then
$\La_k\to +\iy$ as $k\to+\iy$. Define maps $g^n=(g^n_1, g^n_2): A_n\to \R^{k^n_1-1}\times\R^{k_2-1}$ by
{\allowdisplaybreaks
\begin{align}\label{eq2-37}
g_1^n(\vec{u}):=\left(\int_{\Om}\vp_1u_1,\cdots, \int_{\Om}\vp_{k_1^{n}-2}u_1, \int_{\Om}|u_1|^3u_1\right),\\
g_2^n(\vec{u}):=\left(\int_{\Om}\vp_1u_2,\cdots, \int_{\Om}\vp_{k_2-2}u_2, \int_{\Om}|u_2|^3u_2\right).\nonumber
\end{align}
}%
Then $g^n\in F_{(k_1^n, k_2)}(A_n)$ and so there exists $\vec{u}^n=(u_1^n, u_2^n)\in A_n$ such that $g^n(\vec{u}^n)=0$.
As in the proof of Lemma \ref{lemma6}, firstly this means $\vec{u}^n\in A_n\setminus\mathcal{P}_{\dd_n}$ and so
$J_\bb (\vec{u}^n)\le C+1$ for every $n\in \mathbb{N}$. Secondly, we have $u_1^n\in\hbox{span}\{\vp_1,\cdots,\vp_{k_1^{n}-2}\}^{\bot}$ and so
$\int_{\Om}|\nabla u^n_1|^2\ge \La_{k_1^n-1}\int_{\Om}|u^n_1|^2$. Recall that
$$C+1\ge J_\bb (\vec{u}^n)\ge \frac{1}{4\mu_1}\|u_1^n\|^4_{\la_1},$$
we see that $u_1^n$ are uniformly bounded in $H_0^1(\Om)$. Up to a subsequence, we may assume that $u_1^n\to u_1$ weakly in $H_0^1(\Om)$
and strongly in $L^2(\Om)\cap L^4(\Om)$. Since
$$\int_{\Om}|u^n_1|^2\le \frac{1}{\La_{k_1^n-1}}\int_{\Om}|\nabla u^n_1|^2\to 0\quad\hbox{as}\,\,n\to+\iy,$$
so $u_1=0$. On the other hand, $\vec{u}^n\in A_n\subset\mathcal{M}$ yields $|u_1^n|_4=1$ for any $n$, so $|u_1|_4=1$, a contradiction.
Therefore (\ref{eq2-23}) holds and so (\ref{eq2}) has infinitely many sign-changing solutions $(u_{n,1}, u_{n, 2})$ such that
$E_\bb (u_{n,1}, u_{n, 2})\to +\iy$ as $n\to\iy$. By standard elliptic regularity theory, we see that $u_{n, i}\in L^{\iy}(\Om)$. Since
{\allowdisplaybreaks
\begin{align*}
4E_\bb (u_{n, 1}, u_{n, 2})&=\mu_1 |u_{n, 1}|_4^4+\mu_2|u_{n,2}|_4^4-2|\beta| \int_{\Om}u_{n,1}^2u_{n, 2}^2\\
&\le \mu_1 |\Om|\|u_{n, 1}\|^4_{L^\iy(\Om)}+\mu_2 |\Om|\|u_{n, 2}\|^4_{L^\iy(\Om)},
\end{align*}
}%
so (\ref{eq2-24}) holds. Here $|\Om|$ denotes the Lebesgue measure of $\Om$. This completes the proof.\end{proof}

\br\label{rmk4} If $A\in \Gamma^{(k_1, k_2)}\setminus \Gamma_\bb^{(k_1, k_2)}$,
we can not consider the set $\eta (2/C_0, A)$ in the proof of Theorem \ref{th1}, because $\eta (t, \cdot)$ can not be defined on the whole $\mathcal{M}$ for any $t>0$ and so $\eta (2/C_0, A)$ is not well defined.
Hence we can not replace $\Gamma_\bb^{(k_1, k_2)}$ by $\Gamma^{(k_1, k_2)}$ in the definition of $c_{\bb,\dd}^{k_1, k_2}$. Define
$$\widetilde{\Gamma}_{\bb}^{(k_1, k_2)}:=\left\{A\in \Gamma^{(k_1, k_2)} \,:\, \sup_{A}J_\bb< +\iy\right\}.$$
Then for any $A\in \widetilde{\Gamma}_{\bb}^{(k_1, k_2)}$, the set $B=\eta(2/C_0, A)$ is well defined.
Take $\vec{u}\in A$ such that $\eta(2/C_0, \vec{u})\in B\setminus \mathcal{P}_\dd$ as in the proof of Theorem \ref{th1}.
Then, since we do not know whether
$J_\bb(\vec{u})\le c^{k_1, k_2}+1$ holds or not, it seems impossible
for us to prove $\vec{u}\not\in \mathcal{P}_\dd$, which plays a crucial role in the proof of Theorem \ref{th1}.
Therefore we can not replace $\Gamma_\bb^{(k_1, k_2)}$ by $\widetilde{\Gamma}_{\bb}^{(k_1, k_2)}$ in the definition of $c_{\bb,\dd}^{k_1, k_2}$ either.
\er

\section{Proof of Theorem \ref{th2}}
\renewcommand{\theequation}{3.\arabic{equation}}

In this section,
let $k_1=k_2=2$ and take $\dd>0$ small enough such that $c_{\bb, \dd}^{2, 2}$ is a critical value of $E_\bb$.
Write $c_{\bb, \dd}^{2, 2}$ by $c$ for simplicity.
By the proof of Theorem \ref{th1} we see that (\ref{eq2}) has a sign-changing solution $\vec{U}=(U_1, U_2)$ such that
\be\label{eq3-1}E_\bb (\vec{U})=c\le c^{2, 2}.\ee
We will prove that $\vec{U}$ is a least energy sign-changing solution. To do this, let us define
\begin{align}\label{eq3-6}
\tilde{c}:=\inf_{\vec{u}\in\mathcal{N}_\bb} E_{\bb}(\vec{u}),
\end{align}
where
{\allowdisplaybreaks
\begin{align}\label{eq3-7}
\mathcal{N}_\bb:=\Big\{&\vec{u}=(u_1, u_2)\in H \,\,:\,\, \hbox{ both $u_1$ and $u_2$ change sign},\nonumber\\
 &E_\bb'(\vec{u})(u_1^{\pm}, 0)=0,\,\, E_\bb'(\vec{u})(0, u_2^{\pm})=0\Big\}.
\end{align}
}%
Then any sign-changing solutions belong to $\mathcal{N}_\bb$. In particular, $\vec{U}\in \mathcal{N}_\bb$ and so $\tilde{c}\le E_\bb (\vec{U})=c\le c^{2, 2}$.
To prove the opposite inequality $\tilde{c}\ge c$, we need the following lemma.

\bl\label{lemma12}Let $\vec{u}=(u_1, u_2)\in\mathcal{N}_\bb$, then
\be\label{eq3-1}E_\bb (u_1, u_2)=\sup_{t_1^{\pm}, t_2^{\pm}\ge 0}E_{\bb}\left(\sqrt{t_1^+}u_1^+-\sqrt{t_1^-}u_1^-, \,\sqrt{t_2^+}u_2^+-\sqrt{t_2^-}u_2^-\right).\ee\el

\begin{proof} Note that $E_\bb'(\vec{u})(u_1^{\pm}, 0)=0$ and $E_\bb'(\vec{u})(0, u_2^{\pm})=0$ yield
$$\mu_i|u_i^{\pm}|_4^4=\|u_i^{\pm}\|_{\la_i}^2+|\bb|\int_{\Om}|u_i^{\pm}|^2u_j^2,\quad i=1, 2.$$
Then
{\allowdisplaybreaks
\begin{align*}
&2|\bb|\int_{\Om}\left|\sqrt{t_1^+}u_1^+-\sqrt{t_1^-}u_1^-\right|^2\left|\sqrt{t_2^+}u_2^+-\sqrt{t_2^-}u_2^-\right|^2\\
=&2|\bb|t_1^+t_2^+\int_{\Om}(u_1^+)^2(u_2^+)^2+2|\bb|t_1^+t_2^-\int_{\Om}(u_1^+)^2(u_2^-)^2\\
&+2|\bb|t_1^-t_2^+\int_{\Om}(u_1^-)^2(u_2^+)^2+2|\bb|t_1^-t_2^-\int_{\Om}(u_1^-)^2(u_2^-)^2\\
\le& |\bb|\left[(t_1^+)^2+(t_2^+)^2\right]
\int_{\Om}(u_1^+)^2(u_2^+)^2+|\bb|\left[(t_1^+)^2+(t_2^-)^2\right]\int_{\Om}(u_1^+)^2(u_2^-)^2\\
&+|\bb|\left[(t_1^-)^2+(t_2^+)^2\right]
\int_{\Om}(u_1^-)^2(u_2^+)^2+|\bb|\left[(t_1^-)^2+(t_2^-)^2\right]\int_{\Om}(u_1^-)^2(u_2^-)^2\\
=&|\bb|(t_1^+)^2\int_{\Om}(u_1^+)^2u_2^2+|\bb|(t_1^-)^2\int_{\Om}(u_1^-)^2u_2^2\\
&+|\bb|(t_2^+)^2\int_{\Om}u_1^2(u_2^+)^2+|\bb|(t_2^-)^2\int_{\Om}u_1^2(u_2^-)^2\\
=&(t_1^+)^2\mu_1|u_1^+|_4^4+(t_1^-)^2\mu_1|u_1^-|_4^4+(t_2^+)^2\mu_2|u_2^+|_4^4+(t_2^-)^2\mu_2|u_2^-|_4^4\\
&-(t_1^+)^2\|u_1^+\|_{\la_1}^2-(t_1^-)^2\|u_1^-\|_{\la_1}^2-(t_2^+)^2\|u_2^+\|_{\la_2}^2-(t_2^-)^2\|u_2^-\|_{\la_2}^2.
\end{align*}
}%
Hence for any $t_1^{\pm}, t_2^{\pm}\ge 0$ we deduce that
{\allowdisplaybreaks
\begin{align*}
&E_{\bb}\left(\sqrt{t_1^+}u_1^+-\sqrt{t_1^-}u_1^-, \,\sqrt{t_2^+}u_2^+-\sqrt{t_2^-}u_2^-\right)\\
=&\frac{1}{2}t_1^+\|u_1^+\|_{\la_1}^2+\frac{1}{2}t_1^-\|u_1^-\|_{\la_1}^2+
\frac{1}{2}t_2^+\|u_2^+\|_{\la_2}^2+\frac{1}{2}t_2^-\|u_2^-\|_{\la_2}^2\\
&-\frac{1}{4}\Big[(t_1^+)^2\mu_1|u_1^+|_4^4+(t_1^-)^2\mu_1|u_1^-|_4^4
+(t_2^+)^2\mu_2|u_2^+|_4^4+(t_2^-)^2\mu_2|u_2^-|_4^4\Big]\\
&+\frac{1}{2}|\bb|\int_{\Om}\left|\sqrt{t_1^+}u_1^+-\sqrt{t_1^-}u_1^-\right|^2
\left|\sqrt{t_2^+}u_2^+-\sqrt{t_2^-}u_2^-\right|^2\\
\le&\sum_{i=1}^2\left(\frac{t_i^+}{2}-\frac{(t_i^+)^2}{4}\right)\|u_i^+\|_{\la_i}^2+
\sum_{i=1}^2\left(\frac{t_i^-}{2}-\frac{(t_i^-)^2}{4}\right)\|u_i^-\|_{\la_i}^2\\
\le &\frac{1}{4}\left(\|u_1^+\|_{\la_1}^2+\|u_1^-\|_{\la_1}^2+\|u_2^+\|_{\la_2}^2+\|u_2^-\|_{\la_2}^2\right)=E_\bb(u_1, u_2).
\end{align*}
}%
Letting $(t_1^+, t_1^-, t_2^+, t_2^-)=(1,1,1,1)$, we completes the proof.\end{proof}

\bl\label{lemma13} $c=\tilde{c}$ and so $\vec{U}$ is a least energy sign-changing solution of (\ref{eq2}).\el

\begin{proof} Take any $\vec{u}=(u_1, u_2)\in \mathcal{N}_\bb$ such that $E_\bb(\vec{u})<c^{2, 2}+1$. We define
$$A:=A_1\times A_2;\quad A_i:=\{u\in\hbox{span}\{u_i^+, u_i^-\} \,:\, |u|_4=1\}.$$
As in the proof of Lemma \ref{lemma7}, one has $A\in \Gamma^{(2, 2)}$. For any $\vec{v}=(v_1, v_2)\in A$, there exist
$b_i, d_i\in\R$ such that $v_i=b_i u_i^++d_i u_i^-$. Then by (\ref{eq2-9}) and Lemma \ref{lemma12} we have
{\allowdisplaybreaks
\begin{align*}
J_\bb (\vec{v})&=\sup_{t_1, t_2\ge 0}E_\bb (\sqrt{t_1}v_1, \,\sqrt{t_2} v_2)\\
&=\sup_{t_1, t_2\ge 0}E_\bb (\sqrt{t_1}(b_1 u_1^++d_1 u_1^-), \,\sqrt{t_2} (b_2 u_2^++d_2 u_2^-))\\
&=\sup_{t_1, t_2\ge 0}E_\bb \big(\sqrt{t_1}|b_1| u_1^+-\sqrt{t_1}|d_1| u_1^-, \,\sqrt{t_2} |b_2| u_2^+-\sqrt{t_2}|d_2| u_2^-\big)\\
&\le E_\bb(u_1, u_2),
\end{align*}
}%
that is, $\sup_A J_\bb\le E_\bb (\vec{u})<c^{2, 2}+1$ and so $A\in \Gamma^{(2, 2)}_\bb$, which implies
$$c=c_{\bb, \dd}^{2, 2}\le \sup_{\vec{v}\in
A\setminus\mathcal{P}_\dd}J_\bb(\vec{v})\le E_\bb (\vec{u}),\quad\forall\,\vec{u}\in \mathcal{N}_\bb\,\,\hbox{with}\,\,E_\bb(\vec{u})<c^{2, 2}+1.$$
Hence $c\le \tilde{c}$, that is, $\tilde{c}=c=E_\bb (\vec{U})$. Since any sign-changing solutions belong to $\mathcal{N}_\bb$, so $\vec{U}=(U_1, U_2)$ is a
least energy sign-changing solution of (\ref{eq2}).\end{proof}

To continue our proof, we need a classical result by Miranda.

\bl\label{lemma14} (see \cite{Miranda}) Consider a rectangle $\mathbf{R}=\prod_1^s [a_i, b_i]\subset \R^s$ and a continuous function $\Phi: \mathbf{R}\to \R^s$,
$\Phi= (\Phi_1, \cdots, \Phi_s)$. If $\Phi_i|_{x_i=a_i}>0>\Phi_i|_{x_i=b_i}$ holds for every $i$, then $\Phi$ has a zero inside $\mathbf{R}$.\el

\bl\label{lemma15}Both $U_1$ and $U_2$ has exactly two nodal domains.\el

\begin{proof} Since $U_1, U_2$ both change sign, so both $U_1$ and $U_2$ have at least two nodal domains.
Assume by contradiction that $U_1$ has at least three nodal domains $\Om_1, \Om_2$ and $\Om_3$. Without loss of generality, we assume that
$U_1>0$ on $\Om_1\cup\Om_2$.
Define $$u_1^+ := \chi_{\Omega_1} U_1, \quad u_1^- := \chi_{\Omega_2} U_1,\quad u_3 := \chi_{\Omega_3} U_1,$$ where
\begin{displaymath}
\chi_\Omega (x):=\begin{cases} 1, & x\in\Omega,\\
0, & x\in \RN\setminus \Omega.
\end{cases}\end{displaymath}
Then $u_1^{\pm}, u_3 \in H_0^1 (\Omega)\setminus\{0\}$. By $E_\bb' (\vec{U})(u_1^{\pm}, 0)=0$ and $E_\bb' (\vec{U})(0, U_2^{\pm})=0$ we have
{\allowdisplaybreaks
\begin{align}\label{eq3-2}
&\|u_1^{\pm}\|_{\la_1}^2=\mu_1|u_1^{\pm}|_4^4-|\bb|\int_{\Om}(u_1^{\pm})^2 U_2^2,\\
&\label{eq3-3}\|U_2^{\pm}\|_{\la_2}^2=\mu_2|U_2^{\pm}|_4^4-|\bb|\int_{\Om} U_1^2(U_2^{\pm})^2.
\end{align}
}%
Let
$$a:=\frac{1}{2}\min\left\{\frac{\|u_1^{\pm}\|_{\la_1}^2}
{\mu_1|u_1^{\pm}|_4^4},\quad\frac{\|U_2^{\pm}\|_{\la_2}^2}{\mu_2|U_2^{\pm}|_4^4}\right\}>0.$$
From (\ref{eq3-2})-(\ref{eq3-3}) one has $a<1/2$.
For any $b>1$, we define $\Phi=(f_1^+, f_1^-, f_2^+, f_2^-): [a, b]^4\to \R^4$ by
{\allowdisplaybreaks
\begin{align*}
f_1^{\pm}(t_1^+, t_1^-, t_2^+, t_2^-):=\|u_1^{\pm}\|_{\la_1}^2-t_1^{\pm}\mu_1|u_1^{\pm}|_4^4+|\bb|\int_{\Om}(u_1^{\pm})^2 \left|\sqrt{t_2^+}U_2^+-\sqrt{t_2^-}U_2^-\right|^2,\\
f_2^{\pm}(t_1^+, t_1^-, t_2^+, t_2^-):=\|U_2^{\pm}\|_{\la_2}^2-t_2^{\pm}\mu_2|U_2^{\pm}|_4^4
+|\bb|\int_{\Om}\left|\sqrt{t_1^+}u_1^+-\sqrt{t_1^-}u_1^-\right|^2(U_2^{\pm})^2.
\end{align*}
}%
Then for any $(t_1^+, t_1^-, t_2^+, t_2^-)\in [a, b]^4$,
{\allowdisplaybreaks
\begin{align*}
&f_1^{\pm}|_{t_1^{\pm}=a}\ge \|u_1^{\pm}\|_{\la_1}^2-a\mu_1|u_1^{\pm}|_4^4\ge \frac{1}{2}\|u_1^{\pm}\|_{\la_1}^2>0,\\
&f_2^{\pm}|_{t_2^{\pm}=a}\ge \|U_2^{\pm}\|_{\la_2}^2-a\mu_2|U_2^{\pm}|_4^4\ge \frac{1}{2}\|U_2^{\pm}\|_{\la_2}^2>0.
\end{align*}
}%
Moreover, by (\ref{eq3-2}) we have
{\allowdisplaybreaks
\begin{align}\label{eq3-4}
f_1^{\pm}|_{t_1^{\pm}=b}&=\|u_1^{\pm}\|_{\la_1}^2-b\mu_1|u_1^{\pm}|_4^4+|\bb|\int_{\Om}(u_1^{\pm})^2 \left|\sqrt{t_2^+}U_2^+-\sqrt{t_2^-}U_2^-\right|^2\nonumber\\
&=\|u_1^{\pm}\|_{\la_1}^2-b\mu_1|u_1^{\pm}|_4^4+|\bb|\int_{\Om}(u_1^{\pm})^2 \left(t_2^+(U_2^+)^2+t_2^-(U_2^-)^2\right)\nonumber\\
&\le \|u_1^{\pm}\|_{\la_1}^2-b\mu_1|u_1^{\pm}|_4^4+b|\bb|\int_{\Om}(u_1^{\pm})^2 U_2^2\nonumber\\
&=(1-b)\|u_1^{\pm}\|_{\la_1}^2<0,
\end{align}
}%
Similarly, by (\ref{eq3-3}) we have
\begin{align}\label{eq3-5}
f_2^{\pm}|_{t_2^\pm=b}\le\|U_2^{\pm}\|_{\la_2}^2-b\mu_2|U_2^{\pm}|_4^4+b|\bb|\int_{\Om}U_1^2(U_2^{\pm})^2
=(1-b)\|U_2^{\pm}\|_{\la_2}^2<0.
\end{align}
Then by Lemma \ref{lemma14} there exists $\left(\tilde{t}_1^+, \tilde{t}_1^-, \tilde{t}_2^+, \tilde{t}_2^-\right)\in [a, b]^4$ such that
$$f_1^{\pm}\left(\tilde{t}_1^+, \tilde{t}_1^-, \tilde{t}_2^+, \tilde{t}_2^-\right)=0,\quad f_2^{\pm}\left(\tilde{t}_1^+, \tilde{t}_1^-, \tilde{t}_2^+, \tilde{t}_2^-\right)=0.$$
This implies that
$$\left(\sqrt{\tilde{t}_1^+}u_1^+-\sqrt{\tilde{t}_1^-}u_1^-, \sqrt{\tilde{t}_2^+}U_2^+-\sqrt{\tilde{t}_2^-}U_2^-\right)\in \mathcal{N}_\bb.$$
Remark that (\ref{eq3-4}) and (\ref{eq3-5}) hold for any $b>1$, so we obtain that
$$\tilde{t}_1^+\le 1,\quad \tilde{t}_1^-\le 1,\quad \tilde{t}_2^+\le 1,\quad \tilde{t}_2^-\le 1.$$
Hence
{\allowdisplaybreaks
\begin{align*}
c&\le E_\bb\left(\sqrt{\tilde{t}_1^+}u_1^+-\sqrt{\tilde{t}_1^-}u_1^-, \sqrt{\tilde{t}_2^+}U_2^+-\sqrt{\tilde{t}_2^-}U_2^-\right)\\
&=\frac{1}{4}\left(\tilde{t}_1^+\|u_1^+\|_{\la_1}^2
+\tilde{t}_1^-\|u_1^-\|_{\la_1}^2+\tilde{t}_2^+\|U_2^+\|_{\la_2}^2+\tilde{t}_2^-\|U_2^-\|_{\la_2}^2\right)\\
&< \frac{1}{4}\left(\|u_1^+\|_{\la_1}^2+\|u_1^-\|_{\la_1}^2
+\|u_3\|_{\la_1}^2+\|U_2^+\|_{\la_2}^2+\|U_2^-\|_{\la_2}^2\right)\\
&\le \frac{1}{4}\left(\|U_1\|_{\la_1}^2+\|U_2\|_{\la_2}^2\right)=E_\bb(U_1, U_2)=c,
\end{align*}
}%
a contradiction. Hence $U_1$ has exactly two nodal domains. Similarly, $U_2$ has exactly two nodal domains.
\end{proof}

\begin{proof}[Proof of Theorem \ref{th2}.] Theorem \ref{th2} follows directly from Lemmas \ref{lemma13} and \ref{lemma15}.\end{proof}

\section{Proof of Theorem \ref{th3}}
\renewcommand{\theequation}{4.\arabic{equation}}

The following arguments are similar to those in Sections 2-3 with some important modifications. Here, although some definitions
are slight different from those in Section 2, we will use the same notations as in Section 2 for convenience.
To obtain semi-nodal solutions $(u_1, u_2)$ such that $u_1$ changes sign and $u_2$ is positive,
we consider the following functional
\begin{align*}
\widetilde{E}_\bb(u_1, u_2):=\frac{1}{2} \left(\|u_1\|_{\la_1}^2+\|u_2\|_{\la_2}^2\right)
-\frac{1}{4}\left(\mu_1 |u_1|_4^4+\mu_2|u_2^+|_4^4\right)+\frac{|\beta|}{2} \int_{\Om}u_1^2u_2^2,
\end{align*}
and modify the definition of $\wH$ by
$$\wH:=\{(u_1, u_2)\in H : u_1\neq 0,\,\,u_2^+\neq 0\}.$$
Then by similar proofs as in Section 2, we have the following lemmas.
\bl\label{lemma01}For any $(u_1, u_2)\in\wH$, if
$|\bb|^2(\int_{\Om}u_1^2 u_2^2)^2\ge \mu_1\mu_2|u_1|_4^4 |u_2^+|_4^4$,
then
$$\sup_{t_1, t_2\ge 0}\widetilde{E}_\bb (\sqrt{t_1}u_1, \,\sqrt{t_2} u_2)=+\iy.$$\el

\bl\label{lemma02}For any $\vec{u}=(u_1, u_2)\in\wH$, if
\be\label{eq02-3}|\bb|^2\left(\int_{\Om}u_1^2 u_2^2\right)^2< \mu_1\mu_2|u_1|_4^4 |u_2^+|_4^4,\ee
then system
\be\label{eq02-4}
\begin{cases}\|u_1\|_{\la_1}^2 =
t_1\mu_1 |u_1|_4^4-t_2|\beta|\int_{\Om} u_1^2 u_2^2\\
\|u_2\|_{\la_2}^2 =
t_2\mu_2 |u_2^+|_4^4-t_1|\beta|\int_{\Om} u_1^2 u_2^2\end{cases}\ee
has a unique solution
\be\label{eq02-5}
\begin{cases}t_1(\vec{u})=\frac{\mu_2|u_2^+|_4^4\|u_1\|_{\la_1}^2
+|\bb|\|u_2\|_{\la_2}^2\int_{\Om}u_1^2u_2^2}{\mu_1\mu_2|u_1|_4^4 |u_2^+|_4^4-|\bb|^2(\int_{\Om}u_1^2 u_2^2)^2}>0\\
t_2(\vec{u})=\frac{\mu_1|u_1|_4^4\|u_2\|_{\la_2}^2+|\bb|\|u_1\|_{\la_1}^2\int_{\Om}u_1^2u_2^2}{\mu_1\mu_2|u_1|_4^4 |u_2^+|_4^4-|\bb|^2(\int_{\Om}u_1^2 u_2^2)^2}>0.\end{cases}\ee
Moreover,
{\allowdisplaybreaks
\begin{align}\label{eq02-6}\sup_{t_1, t_2\ge 0}&\widetilde{E}_\bb \left(\sqrt{t_1}u_1, \,\sqrt{t_2} u_2\right)=\widetilde{E}_\bb\left(\sqrt{t_1(\vec{u})}u_1, \sqrt{t_2(\vec{u})}u_2\right)\nonumber\\
&=\frac{1}{4}\frac{\mu_2|u_2^+|_4^4\|u_1\|_{\la_1}^4
+2|\bb|\|u_1\|_{\la_1}^2\|u_2\|_{\la_2}^2\int_{\Om}u_1^2u_2^2+\mu_1|u_1|_4^4\|u_2\|_{\la_2}^4}{\mu_1\mu_2|u_1|_4^4 |u_2^+|_4^4-|\bb|^2(\int_{\Om}u_1^2 u_2^2)^2}
\end{align}
}%
and $(t_1(\vec{u}), t_2(\vec{u}))$ is the unique maximum point of $\widetilde{E}_\bb (\sqrt{t_1}u_1, \sqrt{t_2}u_2)$.
\el

Now, we modify the definitions of $\mathcal{M}^\ast$, $\mathcal{M}_\bb^\ast$, $\mathcal{M}_\bb^{\ast\ast}$, $\mathcal{M}$ and $\mathcal{M}_\bb$ by
{\allowdisplaybreaks
\begin{align}\label{eq02-8}
&\mathcal{M}^\ast:=\left\{\vec{u}\in H\,\,:\,\, |u_1|_4>1/2,\,\, |u_2^+|_4>1/2\right\};\nonumber\\
&\mathcal{M}_\bb^\ast:=\left\{\vec{u}\in \mathcal{M}^\ast\,\,:\,\, \hbox{$\vec{u}$ satisfies (\ref{eq02-3})}\right\};\nonumber\\
&\mathcal{M}_\bb^{\ast\ast}:=\left\{\vec{u}\in \mathcal{M}^\ast\,\,:\,\, \mu_1\mu_2-|\bb|^2\left(\int_{\Om}u_1^2 u_2^2\right)^2>0\right\};\nonumber\\
&\mathcal{M}:=\left\{\vec{u}\in H\,\,:\,\, |u_1|_4=1,\,\, |u_2^+|_4=1\right\},\quad \mathcal{M}_\bb:=\mathcal{M}\cap \mathcal{M}_\bb^\ast,
\end{align}
}%
and define a new functional $J_\bb : \mathcal{M}^\ast\to (0, +\iy]$ as in Section 2 by
$$J_\bb (\vec{u}):=\begin{cases}\frac{1}{4}\frac{\mu_2\|u_1\|_{\la_1}^4
+2|\bb|\|u_1\|_{\la_1}^2\|u_2\|_{\la_2}^2\int_{\Om}u_1^2u_2^2+\mu_1\|u_2\|_{\la_2}^4}{\mu_1\mu_2-|\bb|^2(\int_{\Om}u_1^2 u_2^2)^2}
&\hbox{if $\vec{u}\in \mathcal{M}^{\ast\ast}_\bb$},\\
+\iy &\hbox{if $\vec{u}\in \mathcal{M}^\ast\setminus\mathcal{M}^{\ast\ast}_\bb$}.\end{cases}$$
Then $J_\bb\in C(\mathcal{M}^\ast, (0, +\iy])$, $\inf_{\mathcal{M}^\ast}J_\bb\ge C_1>0$ where $C_1$ independent of $\bb<0$,
$J_\bb\in C^1(\mathcal{M}^{\ast\ast}_\bb, \,(0, +\iy))$
and (\ref{eq2-11})-(\ref{eq2-11-1})
hold for any $\vec{u}\in \mathcal{M}_\bb$ and $\vp,\, \psi\in H_0^1(\Om)$.
Note that
Lemmas \ref{lemma01} and \ref{lemma02} yield
\be\label{eq02-9}J_\bb (u_1, u_2)=\sup_{t_1, t_2\ge 0}\widetilde{E}_\bb \left(\sqrt{t_1}u_1, \,\sqrt{t_2} u_2\right),\quad \forall\,(u_1, u_2)\in \mathcal{M}.\ee

For any $\vec{u}=(u_1, u_2)\in \mathcal{M}_\bb^*$, let $\tilde{w}_i\in H_0^1(\Om)$, $i=1, 2$, be the unique
solutions of the following linear problem
\be\label{eq02-012}\begin{cases}
-\Delta \tilde{w}_1+\la_1 \tilde{w}_1+|\bb|t_2(\vec{u})u_2^2 \tilde{w}_1=\mu_1 t_1(\vec{u})u_1^3,\quad \tilde{w}_1\in H_0^1(\Om),\\
-\Delta \tilde{w}_2+\la_2 \tilde{w}_2+|\bb|t_1(\vec{u})u_1^2 \tilde{w}_2=\mu_2 t_2(\vec{u})(u_2^+)^3,\quad \tilde{w}_2\in H_0^1(\Om).
\end{cases}\ee
As in Section 2, we define
\be\label{eq02-12}w_i=\al_i \tilde{w}_i,\quad\hbox{where}\,\,\al_1=\frac{1}{\int_{\Om}u_1^3 \tilde{w}_1}>0,\,\,
\al_2=\frac{1}{\int_{\Om}(u_2^+)^3 \tilde{w}_2}>0 .\ee
Then $(w_1, w_2)$ is the unique solution of the problem
\be\label{eq02-13}\begin{cases}
-\Delta w_1+\la_1 w_1+|\bb|t_2(\vec{u})u_2^2 w_1=\al_1\mu_1 t_1(\vec{u})u_1^3,\quad w_1\in H_0^1(\Om),\\
-\Delta w_2+\la_2 w_2+|\bb|t_1(\vec{u})u_1^2 w_2=\al_2\mu_2 t_2(\vec{u})(u_2^+)^3,\quad w_2\in H_0^1(\Om),\\
\int_{\Om}u_1^3 w_1\,dx=1,\quad \int_{\Om}(u_2^+)^3 w_2\,dx=1.
\end{cases}\ee
As in Section 2, the operator $K=(K_1, K_2) : \mathcal{M}_\bb^\ast\to H$ is defined as
$K(\vec{u}):=\vec{w}=(w_1, w_2)$, and similar arguments as Lemma \ref{lemma3}
yield $K\in C^1(\mathcal{M}_\bb^*, H)$. Since
$u_n\to u$ in $L^4(\Om)$ implies $u_n^+\to u^+$ in $L^4(\Om)$,
so Lemma \ref{lemma4} and its proof with obvious modifications also hold for this new $K$ defined here. Note that
\be\label{eq02-15}K(\sg_1(\vec{u}))=\sg_1(K(\vec{u})).\ee
 Remark that (\ref{eq02-15}) only holds for $\sg_1$ and in the sequel we only use $\sg_1$.
Consider
$$\mathcal{F}=\{A\subset \mathcal{M} : A\,\,\hbox{ is closed and}\,\,\sg_1(\vec{u})\in A\,\,\,\forall\,\,\vec{u}\in A\},$$
and, for each $A\in \mathcal{F}$ and $k_1\ge 2$, the class of functions
$$F_{(k_1, 1)}(A)=\left\{f: A\to \R^{k_1-1} \,:  \,f\,\,\hbox{continuous and}\,\,
 f(\sg_1(\vec{u}))=-f(\vec{u})
 \right\}.$$
\begin{definition} (Modified vector genus, slightly different from Definition \ref{definition1})
Let $A\in \mathcal{F}$ and take any $k_1\in\mathbb{N}$ with $k_1\ge 2$. We say that $\vec{\ga}(A)\ge (k_1, 1)$ if for every $f\in F_{(k_1, 1)}(A)$ there exists $\vec{u}\in A$ such that $f(\vec{u})=0$. We denote
$$\Gamma^{(k_1, 1)}:=\{A\in\mathcal{F} : \vec{\ga}(A)\ge (k_1, 1)\}.$$
\end{definition}

\bl\label{lemma05} With the previous notations, the following properties hold.
\begin{itemize}
\item[$(i)$] Take $A:=A_1\times A_2\subset \mathcal{M}$ and let $\eta: S^{k_1-1}\to A_1$ be a homeomorphism such that
$\eta(-x)=-\eta(x)$ for every $x\in S^{k_1-1}$. Then $A\in \Gamma^{(k_1, 1)}$.

\item[$(ii)$] We have $\overline{\eta(A)}\in \Gamma^{(k_1, 1)}$ whenever $A\in \Gamma^{(k_1, 1)}$ and a continuous map $\eta : A\to \mathcal{M}$ is such
that $\eta\circ \sg_1=\sg_1\circ\eta$.
\end{itemize}\el

\begin{proof} The conclusion (ii) is trivial, we only prove (i).
Fix any $f\in F_{(k_1, 1)}(A)$ and take any $u_2\in A_2$. Define
$\vp: S^{k_1-1}\to \R^{k_1-1}$ by $\vp(x):=f(\eta(x), u_2)$.
Then $\vp$ is continuous and $\vp(-x)=-\vp(x)$. So by Borsuk-Ulam Theorem,
there exists $x_0\in S^{k_1-1}$ such that $\vp(x_0)=0$, that is $f(\eta(x_0), u_2)=0$. Hence $\vec{\ga}(A)\ge (k_1, 1)$ and $A\in\Gamma^{(k_1, 1)}$.\end{proof}

Now we modify the definitions of
$\mathcal{P}$ and $\hbox{dist}_4(\vec{u}, \mathcal{P})$ in (\ref{cone})-(\ref{cone1}) by
\be\label{eq4-1}\mathcal{P}:=\mathcal{P}_1\cup -\mathcal{P}_1,\quad\hbox{dist}_4(\vec{u}, \mathcal{P}):=\min\big\{\hbox{dist}_4(u_1,\,\mathcal{P}_1),\,\,\hbox{dist}_4(u_1,\,-\mathcal{P}_1)\big\}.\ee
Under this new definition, $u_1$ changes sign if $\hbox{dist}_4(\vec{u}, \mathcal{P})>0$.

\bl\label{lemma16} Let $k_1\ge 2$. Then for any $\dd<2^{-1/4}$ and any $A\in\Gamma^{(k_1, 1)}$ there holds $A\setminus \mathcal{P}_\dd\neq\emptyset$.\el

\begin{proof} Fix any $A\in\Gamma^{(k_1, 1)}$. Recall the map
$f_1 : A\to \R^{k_1-1}$ defined in (\ref{eq2-36}).
Clearly $f_1\in F_{(k_1, 1)}(A)$, so there exists $\vec{u}\in A$ such that $f_1(\vec{u})=0$, which means
$\int_{\Om}(u_1^{+})^4=\int_{\Om}(u_1^-)^4=1/2$,
that is, $\hbox{dist}_4(\vec{u}, \mathcal{P})=2^{-1/4}$, so $\vec{u}\in A\setminus \mathcal{P}_\dd$ for every $\dd<2^{-1/4}$.\end{proof}

\bl\label{lemma07} Let $k_1\ge 2$. There exist $A\in\Gamma^{(k_1, 1)}$ and a constant $c^{k_1, 1}\in\mathbb{N}$ independent of $\bb<0$
such that $\sup_{A}J_\bb\le c^{k_1, 1}$ for any $\bb<0$.\el

\begin{proof} Let $B_i$ and $\{\vp^i_{k} : 1\le k\le k_i\}\subset H_0^1(B_i)$ be in the proof of Lemma \ref{lemma7}. Define
$$A_1:=\big\{u\in \hbox{span}\{\vp^1_{1},\cdots, \vp_{k_1}^1\} \,:\, |u|_4=1\big\}, \quad A_2=\left\{C\left|\vp_1^2\right| : C=1/|\vp_1^2|_4\right\}.$$
Then by Lemma \ref{lemma05}-$(i)$ one has $A:=A_1\times A_2\in \Gamma^{(k_1, 1)}$. The rest of the proof is the same as Lemma \ref{lemma7}.
\end{proof}

For every $k_1\ge 2$ and $0<\dd < 2^{-1/4}$, we define
$$ c_{\bb,\dd}^{k_1, 1}:=\inf_{A\in \Gamma_\bb^{(k_1, 1)}}\sup_{\vec{u}\in A\setminus \mathcal{P}_\dd}J_\bb(\vec{u}),$$
where the definition of $\Gamma_\bb^{(k_1, 1)}$ is the same as (\ref{eq2-17-1}).
Then Lemma \ref{lemma07} yields $\Gamma_\bb^{(k_1, 1)}\neq \emptyset$ and so $c_{\bb, \dd}^{k_1, 1}$ is well defined. Moreover,
$c_{\bb, \dd}^{k_1, 1}\le c^{k_1, 1}$ for any $\bb<0$ and $\dd>0.$
Under the new definitions (\ref{eq4-1}), it is easy to see that Lemma \ref{lemma8} also holds here.
Now as in Section 2, we define a map
$V : \mathcal{M}_\bb^\ast \to H$ by $V(\vec{u}):=\vec{u}-K(\vec{u}).$
Then Lemma \ref{lemma9} also holds here. Recall from (\ref{eq02-8}) and (\ref{eq02-13}) that
$$\int_{\Om}(u_2^+)^3 (u_2-w_2)\,dx=1-1=0,\quad\forall\,\, \vec{u}=(u_1, u_2)\in \mathcal{M}_\bb.$$
Then by similar arguments, we see that Lemma \ref{lemma10} also holds here.

\begin{lemma}\label{lemma011} There exists a unique global solution $\eta=(\eta_1, \eta_2) : [0, \iy)\times \mathcal{M}_\bb \to H$ for the initial value problem
\be\label{eq02-19}\frac{d}{dt}\eta(t,\vec{u})=-V(\eta(t, \vec{u})), \quad \eta(0, \vec{u})=\vec{u}\in\mathcal{M}_\bb.\ee
Moreover, conclusions $(i)$, $(iii)$ and $(iv)$ of Lemma \ref{lemma11} also hold here, and
$\eta(t, \sg_1(\vec{u}))=\sg_1(\eta(t, \vec{u}))$ for any $t>0$ and $u\in\mathcal{M}_\bb$.
\end{lemma}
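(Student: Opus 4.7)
The plan is to mimic the proof of Lemma~\ref{lemma11} step by step, making the necessary modifications to account for the fact that $\mathcal{M}$ is now defined using $|u_2^+|_4=1$ rather than $|u_2|_4=1$, and that we only have equivariance under $\sg_1$.

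First I would invoke the modified version of Lemma~\ref{lemma3}, which guarantees $K\in C^1(\mathcal{M}_\bb^\ast,H)$, hence $V\in C^1(\mathcal{M}_\bb^\ast,H)$. Since $\mathcal{M}_\bb\subset\mathcal{M}_\bb^\ast$ and $\mathcal{M}_\bb^\ast$ is open in $H$, the Picard--Lindel\"of theorem produces a unique local solution $\eta:[0,T_{\max})\times\mathcal{M}_\bb\to H$ with $\eta(t,\vec u)\in\mathcal{M}_\bb^\ast$ on $[0,T_{\max})$.

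Next I would show that $\eta(t,\vec u)\in\mathcal{M}$ on $[0,T_{\max})$, which is the key new computation. The first coordinate is handled exactly as before: using $\int_\Om u_1^3 w_1=1$ from~(\ref{eq02-13}), one gets $\frac{d}{dt}|\eta_1(t,\vec u)|_4^4=4(1-|\eta_1(t,\vec u)|_4^4)$, so $|\eta_1(t,\vec u)|_4^4\equiv 1$. For the second coordinate the novelty is that we must differentiate $|\eta_2^+|_4^4$ along the flow. Since $s\mapsto (s^+)^4$ is $C^1$ on $\R$ with derivative $4(s^+)^3$, the chain rule gives
\begin{equation*}
\frac{d}{dt}|\eta_2^+(t,\vec u)|_4^4=4\int_\Om(\eta_2^+)^3\frac{d}{dt}\eta_2\,dx=-4\int_\Om(\eta_2^+)^3\bigl(\eta_2-K_2(\eta)\bigr)\,dx.
\end{equation*}
Because $(\eta_2^+)^3\eta_2=(\eta_2^+)^4$ and the second constraint in~(\ref{eq02-13}) gives $\int_\Om(\eta_2^+)^3 K_2(\eta)\,dx=1$, this becomes $4(1-|\eta_2^+|_4^4)$, which combined with the initial condition $|u_2^+|_4=1$ yields $|\eta_2^+(t,\vec u)|_4\equiv 1$. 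Hence $\eta(t,\vec u)\in\mathcal{M}\cap\mathcal{M}_\bb^\ast=\mathcal{M}_\bb$ on $[0,T_{\max})$, proving~$(i)$. Property $(iii)$ is immediate from Lemma~\ref{lemma10}, which applies verbatim to the new setting.

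To show $T_{\max}=+\infty$, I would argue by contradiction exactly as in Lemma~\ref{lemma11}: if $T_{\max}<+\infty$, then either $\eta(T_{\max},\vec u)\in\mathcal{M}\setminus\mathcal{M}_\bb^\ast$, forcing $J_\bb(\eta(T_{\max},\vec u))=+\infty$, or $\|\eta(t,\vec u)\|_H\to+\infty$; both contradict the energy monotonicity from $(iii)$ together with the coercivity estimate
\begin{equation*}
\tfrac{1}{4\mu_1\mu_2}\bigl(\mu_2\|\eta_1(t,\vec u)\|_{\la_1}^4+\mu_1\|\eta_2(t,\vec u)\|_{\la_2}^4\bigr)\le J_\bb(\eta(t,\vec u))\le J_\bb(\vec u)<+\infty,
\end{equation*}
valid on $\mathcal{M}_\bb$. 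For the $\sg_1$-equivariance, the identity $V\circ\sg_1=\sg_1\circ V$ follows from~(\ref{eq02-15}), and then $t\mapsto \sg_1(\eta(t,\vec u))$ and $t\mapsto\eta(t,\sg_1(\vec u))$ solve the same initial value problem, so uniqueness of the ODE finishes the argument. Finally, $(iv)$ is proved exactly as in Lemma~\ref{lemma11} using the expansion $\eta(t,\vec u)=(1-t)\vec u+tK(\vec u)+o(t)$ and the modified Lemma~\ref{lemma8}, once we choose $\dd_0\in(0,2^{-1/4})$ small enough.

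The main obstacle is the second coordinate conservation: verifying that $|\eta_2^+|_4^4$ is invariant under the flow requires a careful (but straightforward) use of the $C^1$ chain rule for the positive part together with the normalization $\int_\Om(u_2^+)^3 w_2=1$ built into the definition of $K_2$. Everything else is a clean transcription of the bounded-domain argument from Section~2.
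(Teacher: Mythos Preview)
Your proposal is correct and follows essentially the same approach as the paper: the paper's proof also singles out the differentiation of $|\eta_2^+|_4^4$ along the flow (using the $C^1$ chain rule for $(s^+)^4$ together with the normalization $\int_\Om(u_2^+)^3 w_2=1$ from~(\ref{eq02-13})) as the only new computation, and then refers back to Lemma~\ref{lemma11} for the remaining steps.
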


\begin{proof} Recalling $V(\vec{u})\in C^1(\mathcal{M}_\bb^\ast, H)$,
 (\ref{eq02-19}) has a unique solution
$\eta: [0, T_{\max})\times\mathcal{M}_\bb\to H$,
where $T_{\max}>0$ is the maximal time such that $\eta(t, \vec{u})\in \mathcal{M}_\bb^\ast$ for all
$t\in [0, T_{\max})$.
Fix any $\vec{u}=(u_1, u_2)\in \mathcal{M}_\bb$, we deduce from (\ref{eq02-19}) that
{\allowdisplaybreaks
\begin{align*}
\frac{d}{dt}\int_{\Om}\left(\eta_2(t, \vec{u})^+\right)^4\,dx &=-4\int_{\Om}\left(\eta_2(t, \vec{u})^+\right)^3(\eta_2(t, \vec{u})-K_2(\eta(t, \vec{u})))\,dx\\
&=4-4\int_{\Om}\left(\eta_2(t, \vec{u})^+\right)^4\,dx,\quad \forall\, 0< t<T_{\max}.
\end{align*}
}%
that is
$$\frac{d}{dt}\left[e^{4t}\left(\int_{\Om}\left(\eta_2(t, \vec{u})^+\right)^4\,dx-1\right)\right]=0.$$
Since $\int_{\Om}\left(\eta_2(0, \vec{u})^+\right)^4dx=\int_{\Om}(u_2^+)^4dx=1$, so
$\int_{\Om}\left(\eta_2(t, \vec{u})^+\right)^4 dx\equiv 1$
for all $0\le t<T_{\max}$.
Recalling (\ref{eq02-15}), the rest of the proof is similar to Lemma \ref{lemma11}.\end{proof}

Now we can give the proof of Theorem \ref{th3}.

\begin{proof}[Proof of Theorem \ref{th3}.]
{\bf Step 1.} Fix any $k_1\ge 2$. We prove that $c_{\bb, \dd}^{k_1, 1}$ is a sign-changing critical value of $E_\bb$ for $\dd>0$ small.

 By similar arguments as Step 1 in the proof of Theorem \ref{th1}, for small $\dd>0$,
there exists $\vec{u}=(u_1, u_2)\in\mathcal{M}_\bb$ such that
$$J_\bb(\vec{u})=c_{\bb,\dd}^{k_1, 1},\quad V(\vec{u})=0\quad\hbox{and dist}_4(\vec{u}, \mathcal{P})\ge \dd.$$
Then $u_1$ changes sign. Since $V(\vec{u})=0$, so $\vec{u}=K(\vec{u})$. Combining this with (\ref{eq02-13}), we see that $\vec{u}$ satisfies
\be\label{eq02-22-1}\begin{cases}
-\Delta u_1+\la_1 u_1+|\bb|t_2(\vec{u})u_2^2 u_1=\al_1\mu_1 t_1(\vec{u})u_1^3,\\
-\Delta u_2+\la_2 u_2+|\bb|t_1(\vec{u})u_1^2 u_2=\al_2\mu_2 t_2(\vec{u})(u_2^+)^3.
\end{cases}\ee
Since $|u_1|_4=1$, $|u_2^+|_4=1$ and $t_i(\vec{u})$ satisfies (\ref{eq02-4}), so $\al_1=\al_2=1$.
Multiply the second equation of (\ref{eq02-22-1}) by $u_2^-$ and integrate over $\Om$, we get $\|u_2^-\|_{\la_2}^2=0$, so $u_2\ge 0$. By the strong
maximum principle, $u_2>0$ in $\Om$. Hence $(\tilde{u}_1, \tilde{u}_2):=(\sqrt{t_1(\vec{u})}u_1, \sqrt{t_2(\vec{u})}u_2)$ is a semi-nodal solution of the original problem (\ref{eq2}) with $\tilde{u}_1$ sign-changing and $\tilde{u}_2$ positive. Moreover, (\ref{eq02-6}) and (\ref{eq02-9}) yield
$$E_\bb (\tilde{u}_1, \tilde{u}_2)=\widetilde{E}_\bb (\tilde{u}_1, \tilde{u}_2)=J_\bb (u_1, u_2)=c_{\bb, \dd}^{k_1, 1}\le c^{k_1, 1}.$$

{\bf Step 2.} We prove that (\ref{eq2}) has infinitely many semi-nodal solutions.

Assume by contradiction that there exist $k_1^n\to\iy$, $\dd_n\in (0, 2^{-5/4}]$ and a positive constant $C$ such that
$c_{\bb, \dd_n}^{k_1^n, 1}\le C$ for every $n\in\mathbb{N}$.
Then there exists $A_n\in \Gamma_\bb^{(k_1^n, 1)}$ such that
$\sup_{A_n\setminus\mathcal{P}_{\dd_n}} J_\bb\le C+1$ for any $n\in\mathbb{N}$.
Let $\{\vp_{k}\}_k\subset H_0^1(\Om)$ be in the proof of Theorem \ref{th1} and recall the map $g^n_1: A_n\to \R^{k^n_1-1}$ defined in (\ref{eq2-37}).
Then $g_1^n\in F_{(k_1^n, 1)}(A_n)$. By the same arguments as in the proof of Theorem \ref{th1}, we get a contradiction. Therefore, (\ref{eq2}) has infinitely many semi-nodal solutions $\{\vec{u}_n=(u_{n, 1}, u_{n, 2})\}_{n\ge 2}$ which satisfy
\begin{itemize}
\item[$(1)$] $u_{n, 1}$ changes sign and $u_{n, 2}$ is positive;
\item[$(2)$] $E_\bb(u_{n, 1}, u_{n, 2})=c_{\bb, \dd_n}^{n, 1}\le c^{n, 1}$ for some $0<\dd_n<2^{-1/4}$. Moreover,
$$\|u_{n, 1}\|_{L^\iy(\Om)}+\|u_{n,2}\|_{L^\iy(\Om)}\to+\iy\quad\hbox{as}\,\,\,n\to+\iy.$$
\end{itemize}

{\bf Step 3.} We prove that $u_{n, 1}$ has at most $n$ nodal domains.

Assume that $u_{n, 1}$ has at least $n+1$ nodal domains $\Om_k, 1\le k\le n+1$, then $u_{n,1}\chi_{\Om_k}\in H_0^1(\Om)$.
For $1\le k\le n$, we see from $E_\bb'(\vec{u}_n)(u_{n,1}\chi_{\Om_k}, 0)=0$ that
\be\label{eq4-4}\mu_1 |u_{n,1}\chi_{\Om_k}|_4^4=\|u_{n,1}\chi_{\Om_k}\|_{\la_1}^2+|\bb|\int_{\Om} |u_{n,1}\chi_{\Om_k}|^2 u_{n,2}^2,\quad k=1,\cdots, n.\ee
Similarly $E_\bb'(\vec{u}_n)(0, u_{n,2})=0$ yields
$$\mu_2|u_{n,2}|_4^4=\|u_{n,2}\|_{\la_2}^2+|\bb|\int_{\Om}u_{n,1}^2u_{n,2}^2.$$
Then, similarly as Lemma \ref{lemma12} we have
{\allowdisplaybreaks
\begin{align*}
&2|\bb|\int_{\Om}\left|\sum\nolimits_{k=1}^n \sqrt{t_k}u_{n,1}\chi_{\Om_k}\right|^2 |\sqrt{s}u_{n,2}|^2\\
\le&\sum_{k=1}^n |\bb|t_k^2\int_{\Om}|u_{n,1}\chi_{\Om_k}|^2 u_{n,2}^2+|\bb|s^2\int_{\Om}u_{n,1}^2u_{n,2}^2\\
\le&\sum_{k=1}^n t_k^2\left(\mu_1 |u_{n,1}\chi_{\Om_k}|_4^4-\|u_{n,1}\chi_{\Om_k}\|_{\la_1}^2\right)+s^2\left(\mu_{n,2}|u_{n,2}|_4^4-\|u_{n,2}\|_{\la_2}^2\right).
\end{align*}
}%
Recall that $u_{n, 2}$ is positive, so for $t_1,\cdots, t_n, s\ge 0$,
{\allowdisplaybreaks
\begin{align}\label{eq4-3}
&\widetilde{E}_\bb\left(\sum_{k=1}^n \sqrt{t_k}u_{n,1}\chi_{\Om_k}, \sqrt{s}u_{n,2}\right)\nonumber\\
=&\sum_{k=1}^{n}\left(\frac{t_k}{2}\|u_{n,1}\chi_{\Om_k}\|_{\la_1}^2-\frac{t_k^2}{4}\mu_1|u_{n,1}\chi_{\Om_k}|_4^4\right)
+\left(\frac{s}{2}\|u_{n,2}\|_{\la_2}^2-\frac{s^2}{4}\mu_2|u_{n,2}|_4^4\right)\nonumber\\
&+\frac{|\bb|}{2}\int_{\Om}\left|\sum\nolimits_{k=1}^n \sqrt{t_k}u_{n,1}\chi_{\Om_k}\right|^2 |\sqrt{s}u_{n,2}|^2\nonumber\\
=&\sum_{k=1}^{n}\left(\frac{t_k}{2}-\frac{t_k^2}{4}\right)\|u_{n,1}\chi_{\Om_k}\|_{\la_1}^2+\left(\frac{s}{2}
-\frac{s^2}{4}\right)\|u_{n,2}\|_{\la_2}^2\nonumber\\
\le &\frac{1}{4}\sum_{k=1}^{n}\|u_{n,1}\chi_{\Om_k}\|_{\la_1}^2+\frac{1}{4}\|u_{n,2}\|_{\la_2}^2.
\end{align}
}%
Now we define
{\allowdisplaybreaks
\begin{align*}&A:=A_1\times \left\{C u_{n, 2} : C=1/|u_{n,2}|_4\right\};\\
 &A_1:=\big\{u\in\hbox{span}\{u_{n,1}\chi_{\Om_1},\cdots,u_{n,1}\chi_{\Om_n}\} : |u|_4=1\big\}.\end{align*}
}%
Then Lemma \ref{lemma05}-$(i)$ yields $A\in \Gamma^{(n, 1)}$, and similarly as Lemma \ref{lemma13},
we deduce from (\ref{eq02-9}) and (\ref{eq4-3}) that
{\allowdisplaybreaks
\begin{align*}
\sup_A J_\bb&\le \frac{1}{4}\sum_{k=1}^{n}\|u_{n,1}\chi_{\Om_k}\|_{\la_1}^2+\frac{1}{4}\|u_{n,2}\|_{\la_2}^2<\frac{1}{4}\left(\|u_{n, 1}\|_{\la_1}^2+\|u_{n, 2}\|_{\la_2}^2\right)\\
&=E_\bb(\vec{u}_n)=c_{\bb, \dd_n}^{n, 1}\le c^{n, 1},
\end{align*}
}%
and so $A\in \Gamma^{(n, 1)}_\bb$, which implies
\be\label{eq4-5} c_{\bb, \dd_n}^{n, 1}\le \sup_{A\setminus\mathcal{P}_{\dd_n}}J_\bb\le \frac{1}{4}\sum_{k=1}^{n}\|u_{n,1}\chi_{\Om_k}\|_{\la_1}^2+\frac{1}{4}\|u_{n,2}\|_{\la_2}^2<E_\bb (\vec{u}_n),\ee
a contradiction. Hence $u_{n, 1}$ has at most $n$ nodal domains. In particular, $u_{2, 1}$ has exactly two nodal domains.

{\bf Step 4.} We prove that $(u_{2, 1}, u_{2, 2})$ has the least energy among all nontrivial solutions whose first component changes sign.

By similar arguments as in Section 3, we can prove that
\begin{align}\label{eq4-2}
c_{\bb, \dd_2}^{2, 1}=\inf_{\vec{u}\in\mathcal{N}_{2,1, \bb}} E_{\bb}(\vec{u})=\inf_{\vec{u}\in\mathcal{N}_{2,1, \bb}} \widetilde{E}_{\bb}(\vec{u}),
\end{align}
where
{\allowdisplaybreaks
\begin{align*}
\mathcal{N}_{2,1, \bb}:=\Big\{&\vec{u}=(u_1, u_2)\in H \,\,:\,\, \hbox{$u_1$ changes sign and $u_2\ge 0,\,\,u_2\neq 0$},\\
 &\quad E_\bb'(\vec{u})(u_1^{\pm}, 0)=0,\,\, E_\bb'(\vec{u})(0, u_2)=0\Big\}.
\end{align*}
}%
Let $\vec{u}=(u_1, u_2)$ be any a nontrivial solution of (\ref{eq2}) with $u_1$ sign-changing. Without loss of generality we assume $u_2^+\neq 0$.
Then by a similar argument as Lemma \ref{lemma15}, there exists $t_1^\pm, t_2^+\in (0, 1]$ such that
$$\left(\sqrt{t_1^+}u_1^+-\sqrt{t_1^-}u_1^-, \sqrt{t_2^+}u_2^+\right)\in \mathcal{N}_{2, 1, \bb},$$
and so
{\allowdisplaybreaks
\begin{align*}
E_\bb(u_{2, 1}, u_{2,2})&=c_{\bb, \dd_2}^{2, 1}\le E\left(\sqrt{t_1^+}u_1^+-\sqrt{t_1^-}u_1^-, \sqrt{t_2^+}u_2^+\right)\\
&\le\frac{1}{4}(\|u_1\|_{\la_1}^2+\|u_2^+\|_{\la_2}^2)\le E_\bb(\vec{u}).\end{align*}
}%
Hence $(u_{2, 1}, u_{2, 2})$ has the least energy among all nontrivial solutions whose first component changes sign.
This completes the proof. \end{proof}

\section{Asymptotic behaviors and  phase seperation}
\renewcommand{\theequation}{5.\arabic{equation}}

In this section, we study the limit behavior of solutions obtained above as $\bb\to-\iy$. Fix any $k_1, k_2\in\mathbb{N}$ such that
$k_1\ge 2$ and $k_2\ge 1$.
Then by the arguments in above sections we know that, for any $\bb<0$, there exists $\dd_{\bb}\in (0, 2^{-1/4})$
and $\vec{u}_\bb=(u_{\bb, 1}, u_{\bb, 2})\in H$ such that $\vec{u}_\bb$ is a nontrivial solution (either sign-changing or semi-nodal) of (\ref{eq2}) with
$$E_\bb(\vec{u}_\bb)=c_{\bb, \dd_{\bb}}^{k_1, k_2}\le c^{k_1, k_2}<+\iy.$$
Here $c^{k_1, k_2}$ is seen in Lemmas \ref{lemma7} and \ref{lemma07}. Recall that
$$E_\bb(\vec{u}_\bb)=\frac{1}{4}\left(\|u_{\bb, 1}\|_{\la_1}^2+\|u_{\bb, 2}\|_{\la_2}^2\right),$$
we see that $\vec{u}_\bb$ are uniformly bounded in $H$. On the other hand, by Kato's inequality (see \cite{KA}) we have
$$\Delta |u_{\bb, i}|\ge \Delta u_{\bb, i} \cdot\frac{u_{\bb, i}}{|u_{\bb,i}|}
\quad
\hbox{in}\,\, (H_0^1(\Om))'.$$ Recall that $\bb<0$, then it is easy to check that
$$-\Delta |u_{\bb, i}|+\la_i |u_{\bb, i}|\le\mu_i |u_{\bb, i}|^3,\quad |u_{\bb, i}|\in H_0^1(\Om).$$
Hence by standard Moser iteration, we see that $u_{\bb, i}$ are uniformly bounded in $L^\iy(\Om)$ for any $\bb<0$ and $i=1, 2$.
Moreover, by elliptic regularity theory it holds that $u_{\bb, i}\in C(\overline{\Om})\cap C^2(\Om)$. The main result of this section is following.

\bt\label{th4} There exists a vector Lipschitz function $\vec{u}_\iy=(u_{\iy, 1}, u_{\iy, 2})\in \widetilde{H}$ such that, up to a subsequence,
\begin{itemize}
\item[$(1)$] $u_{\bb, i}\to u_{\iy, i} $ in $H_0^1(\Om)\cap C^{0, \al}(\overline{\Om})$ for every $0<\al<1$ as $\bb\to-\iy$;
\item[$(2)$] $-\Delta u_{\iy, i}+\la_i u_{\iy, i}=\mu_i u_{\iy, i}^3$ in the open set $\{u_{\iy, i}\neq 0\}$;
\item[$(3)$] $u_{\iy, 1}\cdot u_{\iy, 2}\equiv 0$ and $|\bb|\int_{\Om} u_{\bb, 1}^2 u_{\bb, 2}^2\,dx\to 0$ as $\bb\to-\iy$;
\item[$(4)$] if $k_1, k_2\ge 2$, then $u_{\iy, i}$ changes sign for $i=1, 2$. Moreover, if $k_1=k_2=2$, then $\{u_{\iy, i}\neq 0\}$ has exactly two connected components, and $u_{\iy, i}$ is a least energy
sign-changing solution of
\be\label{eq5-1}-\Delta u+\la_i u=\mu_i u^3,\quad u\in H_0^1(\{u_{\iy, i}\neq 0\})\ee for $i=1, 2$;
\item[$(5)$] if $k_1\ge 2$ and $k_2=1$, then $u_{\iy, 1}$ changes sign, $\{u_{\iy, 1}\neq 0\}$ has at most $k_1$ connected components and $u_{\iy, 2}$ is positive in $\{u_{\iy, 2}\neq 0\}$. Moreover,
 if $(k_1, k_2)=(2, 1)$, then $\{u_{\iy, 1}\neq 0\}$ has exactly two connected components,
 $u_{\iy, 1}$ is a least energy
sign-changing solution of (\ref{eq5-1}) for $i=1$, $\{u_{\iy, 2}\neq 0\}$ is connected, and $u_{\iy, 2}$ is a least energy solution of (\ref{eq5-1}) for $i=2$.
\end{itemize}
\et

\begin{proof} Since $u_{\bb, i}$ are uniformly bounded in $L^\iy(\Om)$ for any $\bb<0$ and $i=1, 2$, then $(1)-(3)$ follows from
\cite[Theorems 1.1 and 1.2]{NTTV}. Remark that, although in \cite{NTTV} the
results are stated for nonnegative solutions, they also hold for solutions with no sign-restrictions; all arguments there can be adapted with little extra
effort to this more general case, working with the positive and negative parts of a solution. This fact was pointed out in the proof of \cite[Theorem 4.3]{TT}.

It remains to prove $(4)-(5)$. First we consider the case $k_1, k_2\ge 2$. Since $u_{\bb, i}\in C(\overline{\Om})\cap C^2(\Om)$ and $u_{\bb, i}$ changes sign, so there exists $x_{\bb,i}^\pm\in \Om$ such that
$$u_{\bb, i}(x_{\bb, i}^+)=\max_{x\in\Om} u_{\bb, i}(x)>0\quad\hbox{and}\quad u_{\bb, i}(x_{\bb, i}^-)=\min_{x\in\Om} u_{\bb, i}(x)<0,\quad i=1, 2.$$
Then $\Delta u_{\bb, i}(x_{\bb, i}^+)\le 0$. Since $\vec{u}_\bb$ satisfies (\ref{eq2}) and $\bb<0$, so
$\la_i u_{\bb, i}(x_{\bb, i}^+)\le\mu_i u_{\bb, i}^3(x_{\bb, i}^+)$,
which implies
$$u_{\bb, i}(x_{\bb, i}^+)=\max_{x\in\Om} u_{\bb, i}(x)\ge\sqrt{\la_i/\mu_i},\quad\forall\,\bb<0.$$
Similarly,
$$u_{\bb, i}(x_{\bb, i}^-)=\min_{x\in\Om} u_{\bb, i}(x)\le-\sqrt{\la_i/\mu_i},\quad\forall\,\bb<0.$$
Combining these with $(1)$, we see that $u_{\iy, i}$ changes sign, and so $\{u_{\iy, i}\neq 0\}$ has at least two connected components.

Now we let $(k_1, k_2)=(2, 2)$. Assume by contradiction that $\{u_{\iy, 1}\neq 0\}$ has at least three connected components
$\Om_1, \Om_2$ and $\Om_3$. Without loss of generality, we assume that
$u_{\iy, 1}>0$ on $\Om_1\cup\Om_2$. As in the proof of Lemma \ref{lemma15}, we
define $u_1^+ := \chi_{\Omega_1} u_{\iy, 1}$, $u_1^- := \chi_{\Omega_2} u_{\iy, 1}$ and $u_3 := \chi_{\Omega_3} u_{\iy, 1}$,
then $\|u_{\iy, 1}\|_{\la_1}^2>\|u_1^+\|_{\la_1}^2+\|u_1^-\|_{\la_1}^2$. By Theorem \ref{th4}-$(2)$ and (\ref{eq3-7}) it is easy to see that $(u_1^+-u_1^-, u_{\iy, 2})\in \mathcal{N}_\bb$ for all $\bb<0$, so
{\allowdisplaybreaks
\begin{align}\label{eq5-2}
\frac{1}{4}\left(\|u_{\iy, 1}\|_{\la_1}^2+\|u_{\iy, 2}\|_{\la_2}^2\right)&=\lim_{\bb\to-\iy}\frac{1}{4}\left(\|u_{\bb, 1}\|_{\la_1}^2+\|u_{\bb, 2}\|_{\la_2}^2\right)\nonumber\\
&=\lim_{\bb\to-\iy}E_\bb(u_{\bb, 1}, u_{\bb, 2})=\lim_{\bb\to-\iy}c_{\bb, \dd_\bb}^{2, 2}\nonumber\\
&\le\lim_{\bb\to-\iy}E_\bb(u_1^+-u_1^-, u_{\iy, 2})\nonumber\\
&=\frac{1}{4}\left(\|u_1^+\|_{\la_1}^2+\|u_1^-\|_{\la_1}^2+\|u_{\iy, 2}\|_{\la_2}^2\right),
\end{align}
}%
a contradiction. Hence $\{u_{\iy, i}\neq 0\}$ has exactly two connected components. If $u_i\in H_0^1(\{u_{\iy, i}\neq 0\})$ is
any sign-changing solutions of (\ref{eq5-1}),
then $(u_1, u_{\iy, 2})$, $(u_{\iy, 1}, u_2)\in \mathcal{N}_\bb$ for all $\bb<0$, so
similarly as (\ref{eq5-2}) we see that $\|u_{\iy, i}\|_{\la_i}^2\le\|u_i\|_{\la_i}^2$,
that is, $u_{\iy, i}$ has the least energy among all
sign-changing solutions of (\ref{eq5-1}). Hence $u_{\iy, i}$ is a least
energy sign-changing solution of (\ref{eq5-1}),
and $(4)$ holds.

Now we consider $k_1\ge 2$ and $k_2=1$. Then $u_{\iy, 1}$ changes sign as above. Since $u_{\bb, 2}$ is positive, so $u_{\iy, 2}>0$ in $\{u_{\iy, 2}\neq 0\}$.
Define
{\allowdisplaybreaks
\begin{align*}
\mathcal{N}_{k_1,1,\bb}:=\Big\{&\vec{u}=(u_1, u_2)\in H \,\,:\,\, \hbox{$u_1$ changes sign and has at least $k_1$ nodal}\\
&\hbox{domains $\Om_k (1\le k\le k_1)$},\,\, u_2\neq 0,\,\,u_2\ge 0,\\
&E_\bb'(\vec{u})(u_1\chi_{\Om_k}, 0)=0,\,\,\forall\,1\le k\le k_1,\,\, E_\bb'(\vec{u})(0, u_2)=0\Big\}.
\end{align*}
}%
Then the same arguments as (\ref{eq4-4})-(\ref{eq4-5}) yield that
$$ c_{\bb, \dd_\bb}^{k_1, 1}\le \inf_{\vec{u}\in\mathcal{N}_{k_1, 1, \bb}}E_\bb(\vec{u}).$$
If $\{u_{\iy, 1}\neq 0\}$ has at least $k_1+1$ connected components $\Om_k (1\le k\le k_1+1)$, then
$$\left(u_{\iy, 1}\chi_{\cup_{k=1}^{k_1}\Om_k}, \,\,u_{\iy, 2}\right)\in \mathcal{N}_{k_1, 1, \bb}\quad\forall\,\,\bb<0.$$
Similarly as (\ref{eq5-2}) we get a contradiction. Hence, $\{u_{\iy, 1}\neq 0\}$ has at most $k_1$ connected components.

If $(k_1, k_2)=(2, 1)$, then $\{u_{\iy, 1}\neq 0\}$ has exactly two connected components. If $\{u_{\iy, 2}\neq 0\}$ has
at least two connected components $\Om_1$ and $\Om_2$, then $(u_{\iy, 1}, u_{\iy, 2}\chi_{\Om_1})\in \mathcal{N}_{2, 1, \bb}$ for all $\bb<0$, and similarly as (\ref{eq5-2}) we get a contradiction. Hence $\{u_{\iy, 2}\neq 0\}$ is connected.
Finally, similarly as above, we can prove that $u_{\iy, 1}$ is a least energy
sign-changing solution of (\ref{eq5-1}) for $i=1$, and $u_{\iy, 2}$ is a least energy solution of (\ref{eq5-1}) for $i=2$.
This completes the proof.\end{proof}

\section{The entire space case}
\renewcommand{\theequation}{6.\arabic{equation}}

In this final section, we extend some results above to the case where $\Om=\RN$. That is, we consider
the following elliptic system in the entire space
\be\label{eq6-1}
\begin{cases}-\Delta u_1 +\la_1 u_1 =
\mu_1 u_1^3+\beta u_1 u_2^2, \quad x\in \RN,\\
-\Delta u_2 +\la_2 u_2 =\mu_2 u_2^3+\beta u_1^2 u_2,  \quad   x\in\RN,\\
u_1(x),\,u_2(x)\to 0  \,\,\,\hbox{as}\,\,|x|\to +\iy.\end{cases}\ee
By giving some modifications to arguments in Sections 2-4 and introducing some different ideas and techniques, we can prove the following results.

\bt\label{th6-1}Let $N=2, 3$, $\la_1, \la_2$, $\mu_1, \mu_2>0$ and $\bb<0$. Then (\ref{eq6-1}) has infinitely
many radially symmetric sign-changing solutions, including a special $(u_1, u_2)$ such that both $u_1$ and $u_2$
have exactly two nodal domains and $(u_1, u_2)$ has the least energy among all radially symmetric sign-changing solutions.\et

\bt\label{th6-3} Let assumptions in Theorem \ref{th6-1} hold.
Then (\ref{eq6-1}) has infinitely many radially symmetric semi-nodal solutions $\{(u_{n,1}, u_{n, 2})\}_{n\ge 2}$ such that
\begin{itemize}
\item[$(1)$] $u_{n, 1}$ changes sign and $u_{n, 2}$ is positive;
\item[$(2)$] $u_{n, 1}$ has at most $n$ nodal domains. In particular, $u_{2, 1}$ has exactly two nodal domains, and $(u_{2,1}, u_{2,2})$ has the least energy among all nontrivial radially symmetric solutions whose first component changes sign.
\end{itemize}
\et

\br Let assumptions in Theorem \ref{th6-1} hold. Lin and Wei \cite{LW1} proved that (\ref{eq6-1}) has no least energy solutions.
Later, Sirakov \cite{S} proved that (\ref{eq6-1}) has a radially symmetric positive solution which has the least energy among all nontrivial radially symmetric
solutions. Combining these with the introduction in Section 1, our results here are completely new.
\er

Define
$H_r:= H_r^1(\RN)\times H_r^1(\RN)$ as a subspace of $H:=H^1(\RN)\times H^1(\RN)$ with norm $\|(u_1, u_2)\|_{H}^2=\|u_1\|_{\la_1}^2+\|u_2\|_{\la_2}^2$, where
{\allowdisplaybreaks
\begin{align*}
&H_r^1(\RN):=\left\{u\in H^1(\RN) \,\,: \,\,u\,\,\hbox{is radially symmetric}\right\},\\
&\|u\|_{\la_i}^2:=\int_{\RN}(|\nabla u|^2+\la_i u^2)\,dx.
\end{align*}
}%
Since the embedding $H_r^1(\RN)\hookrightarrow L^4(\RN)$ is compact,
by replacing $H_0^1(\Om), H$ with $H_r^1(\RN), H_r$ respectively in all definitions appeared in Sections 2-4 and using the same notations,
it is easy to see that all arguments (with trivial modifications) in Sections 2-4 hold for system (\ref{eq6-1})
except those in Step 2 of proving Theorems \ref{th1} and \ref{th3}.
Hence we only need to reprove Step 2 in the proofs of Theorems \ref{th1} and \ref{th3}.
The following ideas and arguments are quite different from those in Step 2 of proving Theorems \ref{th1} and \ref{th3},
and also can be used in the bounded domain case.

\begin{proof}[Proof of Theorem \ref{th6-1}.] Assume by contradiction that there exists $n_0\in\mathbb{N}$ such that (\ref{eq6-1}) has only $n_0$ radially symmetric sign-changing solutions.
Fix any $k_2\ge 2$, we define
$$l:=\max\left\{c^{k_1, k_2} : 2\le k_1\le n_0+2\right\}+1.$$
For any $k_1\in [2, n_0+2]$ and $0<\dd < 2^{-1/4}$, similarly as (\ref{eq2-17})-(\ref{eq2-17-1}) we define
\be\label{eq6-17}c_{\bb, l,\dd}^{k_1, k_2}:=\inf_{A\in \Gamma_{\bb, l}^{(k_1, k_2)}}\sup_{\vec{u}\in A\setminus \mathcal{P}_\dd}J_\bb(\vec{u}),\ee
where
\be\label{eq6-17-1}\Gamma_{\bb, l}^{(k_1, k_2)}:=\left\{A\in \Gamma^{(k_1, k_2)} \,:\, \sup_{A}J_\bb< l\right\}.\ee
Lemma \ref{lemma7} yields that $\Gamma_{\bb, l}^{(k_1, k_2)}\neq \emptyset$, $c_{\bb, l, \dd}^{k_1, k_2}$ is
well defined and  $c_{\bb, l, \dd}^{k_1, k_2}\le c^{k_1, k_2}$ for each $k_1\in [2, n_0+2]$.
Noting that $\Gamma_{\bb, l}^{(k_1+1, k_2)}\subset\Gamma_{\bb, l}^{(k_1, k_2)}$, we have
\begin{align}\label{eq6-2}
c_{\bb, l,\dd}^{2, k_2}\le c_{\bb, l, \dd}^{3, k_2}\le\cdots\le c_{\bb, l, \dd}^{n_0+1, k_2}\le c_{\bb, l, \dd}^{n_0+2, k_2}.
\end{align}

By repeating the arguments in Section 2 we can prove that, there exists $\dd_{l}\in (0, 2^{-1/4})$ such that
for any $\dd\in (0, \dd_l)$,
\be\label{eq6-3}\eta(t, \vec{u})\in \mathcal{P}_\dd \quad\hbox{whenever}\,\,u\in \mathcal{M}_\bb\cap \mathcal{P}_\dd, \, J_\bb(u)\le l\,\,\hbox{and}\,\,t>0,\ee
and so $c_{\bb, l, \dd}^{k_1, k_2}$ is a radially symmetric sign-changing critical value of $E_\bb$ for each $k_1\in [2, n_0+2]$
(that is,
$E_\bb$ has a radially symmetric sign-changing critical point $\vec{u}$ with $E_\bb (\vec{u})=c_{\bb, l, \dd}^{k_1, k_2}$).
Fix any a $\dd\in (0, \dd_l)$. By (\ref{eq6-2}) and our assumption that
(\ref{eq6-1}) has only $n_0$ radially symmetric sign-changing solutions, there exists some $2\le N_1\le n_0+1$ such that
\be\label{eq2-27}c_{\bb, l,\dd}^{N_1, k_2}= c_{\bb, l,\dd}^{N_1+1, k_2}=:\bar{c}.\ee
Define
\be\label{eq6-10}\mathcal{K}:=\{\vec{u}\in \mathcal{M} \,\,\,:\,\,\,\vec{u}\,\,\hbox{sign-changing},\,\,\, J_\bb(\vec{u})=\bar{c},\,\,\,V(\vec{u})=0 \}.\ee
Then $\mathcal{K}$ is finite. By (\ref{eq2-15}) one
has that $\sg_i(\vec{u})\in \mathcal{K}$ if $\vec{u}\in\mathcal{K}$, that is, $\mathcal{K}\subset\mathcal{F}$.
Hence there exist $k\le n_0$ and $\{\vec{u}_m : 1\le m\le k\}\subset \mathcal{K}$
such that
$$\mathcal{K}=\{\vec{u}_m,\,\sg_1(\vec{u}_m),\, \sg_2(\vec{u}_m),\,-\vec{u}_m\,\,:\,\,\,1\le m\le k\}.$$
Then there exist open neighborhoods $O_{\vec{u}_m}$ of $\vec{u}_m$ in $H_r$, such that
any two of $\overline{O_{\vec{u}_{m_1}}}, \,\sg_1(\overline{O_{\vec{u}_{m_2}}}), \,\sg_2(\overline{O_{\vec{u}_{m_3}}})$ and $-\overline{O_{\vec{u}_{m_4}}}$,
where $1\le m_1, m_2, m_3, m_4\le k$, are disjointed and
$$
\mathcal{K}\subset O:=\bigcup_{m=1}^k
O_{\vec{u}_m}\cup\sg_1(O_{\vec{u}_m})\cup\sg_2(O_{\vec{u}_m})\cup-O_{\vec{u}_m}.
$$
Define a continuous map $\tilde{f}: \overline{O}\to \R\setminus\{0\}$ by
$$\tilde{f}(\vec{u}):=\begin{cases}1,&\hbox{if}\,\, \vec{u}\in \bigcup_{m=1}^k\overline{O_{\vec{u}_m}}\cup\sg_2(\overline{O_{\vec{u}_m}}),\\
-1, &\hbox{if}\,\, \vec{u}\in\bigcup_{m=1}^k \sg_1(\overline{O_{\vec{u}_m}})\cup-\overline{O_{\vec{u}_m}}.\end{cases}$$
Then
$\tilde{f}(\sg_1(\vec{u}))=-\tilde{f}(\vec{u})$ and $\tilde{f}(\sg_2(\vec{u}))=\tilde{f}(\vec{u}).$
By Tietze's extension theorem, there exists $f\in C(H_r, \R)$ such that $f|_{O}\equiv\tilde{f}$.
Define
$$F(\vec{u}):=\frac{f(\vec{u})+f(\sg_2(\vec{u}))-f(\sg_1(\vec{u}))-f(-\vec{u})}{4},$$
then $F|_O\equiv\tilde{f}$,
$F(\sg_1(\vec{u}))=-F(\vec{u})$ and $F(\sg_2(\vec{u}))=F(\vec{u})$.
Define
$$\mathcal{K}_\tau:=\left\{\vec{u}\in \mathcal{M} : \inf_{\vec{v}\in \mathcal{K}}\|\vec{u}-\vec{v}\|_{H}<\tau\right\}.$$
Then we can take small $\tau>0$ such that $\mathcal{K}_{2\tau}\subset O$. Recalling $V(\vec{u})=0$ in $\mathcal{K}$
and $\mathcal{K}$ finite, there exists $\widetilde{C}>0$ such that
\be\label{eq2-28}\|V(\vec{u})\|_{H}\le\widetilde{C},\quad\forall\,\,\vec{u}\in \overline{\mathcal{K}_{2\tau}}.\ee
For any $\vec{u}\in \mathcal{K}_{2\tau}$,
we have $F(\vec{u})=\tilde{f}(\vec{u})\neq 0$. That is
$F(\mathcal{K}_{2\tau})\subset\R\setminus\{0\}$.
By (\ref{eq6-10}) and Lemma \ref{lemma9} there exists small $\e\in (0, 1)$ such that
\be\label{eq2-30}\|V(\vec{u})\|_H^2\ge \e,\quad\forall\,u\in\mathcal{M}_\bb\setminus(\mathcal{K}_{\tau}\cup \mathcal{P}_\dd)
\,\,\,\hbox{satisfying}\,\,\,|J_\bb(\vec{u})-\bar{c}|\le 2\e.\ee
Recall $C_0$ in (\ref{eq2-16}), we let
\be\label{eq2-32}\al:=\frac{1}{2}\min\left\{1, \frac{\tau C_0}{2\widetilde{C}}\right\}.\ee
By (\ref{eq6-17})-(\ref{eq6-17-1}) and (\ref{eq2-27})
we take $A\in \Gamma_{\bb, l}^{(N_1+1, k_2)}$ such that
\be\label{eq2-33}\sup_{A\setminus \mathcal{P}_{\dd}}J_\bb<c_{\bb, l, \dd}^{N_1+1, k_2}+\al\e=\bar{c}+\al\e.\ee
Let $B:=A\setminus \mathcal{K}_{2\tau}$, then it is easy to check that $B\subset\mathcal{F}$. We claim that $\vec{\ga}(B)\ge (N_1, k_2)$. If not, there exists
$\tilde{g}\in F_{(N_1, k_2)}(B)$ such that $\tilde{g}(\vec{u})\neq 0$ for any $\vec{u}\in B$.
By Tietze's extension theorem, there exists $\bar{g}=(\bar{g}_1, \bar{g}_2)\in C(H_r, \R^{N_1-1}\times\R^{k_2-1})$ such that $\bar{g}|_{B}\equiv\tilde{g}$.
Define $g=(g_1, g_2)\in C(H_r, \R^{N_1-1}\times\R^{k_2-1})$ by
{\allowdisplaybreaks
\begin{align*}g_1(\vec{u}):=
\frac{\bar{g}_1(\vec{u})+\bar{g}_1(\sg_2(\vec{u}))-\bar{g}_1(\sg_1(\vec{u}))-\bar{g}_1(-\vec{u})}{4},\\
g_2(\vec{u}):=\frac{\bar{g}_2(\vec{u})+\bar{g}_2(\sg_1(\vec{u}))-\bar{g}_2(\sg_2(\vec{u}))-\bar{g}_2(-\vec{u})}{4},
\end{align*}
}%
then $g|_{B}\equiv\tilde{g}$,
$g_i(\sg_i(\vec{u}))=-g_i(\vec{u})$ and $g_i(\sg_j(\vec{u}))=g_i(\vec{u})$ for $j\neq i$.
Finally we define $G=(G_1, G_2)\in C(A,\, \R^{N_1+1-1}\times\R^{k_2-1})$ by
$$G_1(\vec{u}):=(F(\vec{u}),\, g_1(\vec{u}))\in \R^{N_1+1-1},\quad G_2(\vec{u}):=g_2(\vec{u})\in\R^{k_2-1}.$$
By our constructions of $F$ and $g$, we have $G\in F_{(N_1+1, k_2)}(A)$. Since $\vec{\ga}(A)\ge (N_1+1, k_2)$, so $G(\vec{u})=0$ for some $\vec{u}\in A$.
If $\vec{u}\in \mathcal{K}_{2\tau}$, then $F(\vec{u})\neq 0$, a contradiction. So $\vec{u}\in A\setminus\mathcal{K}_{2\tau}=B$,
and then $g(\vec{u})=\tilde{g}(\vec{u})\neq 0$, also a contradiction. Hence $\vec{\ga}(B)\ge (N_1, k_2)$.
Note that $\sup_{B}J_\bb\le \sup_{A}J_\bb<l$,
we see that $B\subset\mathcal{M}_\bb$ and $B\in \Gamma_{\bb, l}^{(N_1, k_2)}$.
Then we can consider $D:=\eta(\tau/(2\widetilde{C}), B)$, where $\eta$ is in Lemma \ref{lemma11} and $\widetilde{C}$ is in (\ref{eq2-28}). By Lemma \ref{lemma5}-$(ii)$ and Lemma \ref{lemma11} we have $D\in \Gamma^{(N_1, k_2)}$ and
$\sup_{D} J_\bb\le\sup_B J_\bb<l$, that is $D\in \Gamma_{\bb, l}^{(N_1, k_2)}$.
Then we see from (\ref{eq6-17})-(\ref{eq6-17-1}) and (\ref{eq2-27}) that
$$\sup_{D\setminus \mathcal{P}_{\dd}}J_\bb\ge c_{\bb, l, \dd}^{N_1, k_2}=\bar{c}.$$
By Lemma \ref{lemma6} we can take $\vec{u}\in B$ such that $\eta(\tau/(2\widetilde{C}), \vec{u})\in D\setminus \mathcal{P}_{\dd}$ and
$$\bar{c}-\al\e\le \sup_{D\setminus \mathcal{P}_\dd}J_\bb-\al\e< J_\bb (\eta(\tau/(2\widetilde{C}), \vec{u})).$$
Since $J_\bb (\eta(t, \vec{u}))\le J_\bb(\vec{u})<l$ for any $t\ge 0$, (\ref{eq6-3}) yields $\eta(t, \vec{u})\not\in\mathcal{P}_\dd$ for any $t\in [0, \tau/(2\widetilde{C})]$. In particular, $\vec{u}\not\in\mathcal{P}_{\dd}$ and so (\ref{eq2-33}) yields $J_\bb(\vec{u})< \bar{c}+\al\e$. Then
for any $t\in [0, \tau/(2\widetilde{C})]$, we have
$$\bar{c}-\al\e< J_\bb (\eta(\tau/(2\widetilde{C}), \vec{u}))\le J_\bb (\eta(t, \vec{u}))\le J_\bb (\vec{u})< \bar{c}+\al\e.$$
Recall that $\vec{u}\in B=A\setminus\mathcal{K}_{2\tau}$. If there exists $T\in (0, \tau/(2\widetilde{C}))$ such
that $\eta(T, \vec{u})\in \mathcal{K}_\tau$, then
there exist $0\le t_1<t_2\le T$ such that $\eta(t_1, \vec{u})\in\partial\mathcal{K}_{2\tau}$, $\eta(t_2, \vec{u})\in\partial\mathcal{K}_{\tau}$ and
$\eta(t, \vec{u})\in\mathcal{K}_{2\tau}\setminus\mathcal{K}_\tau$ for any $t\in (t_1, t_2)$. So we see from (\ref{eq2-28}) that
$$\tau\le\|\eta(t_1, \vec{u})-\eta(t_2, \vec{u})\|_H=\left\|\int_{t_1}^{t_2}V(\eta(t, \vec{u}))\,dt\right\|_H\le 2\widetilde{C}(t_2-t_1),$$
that is, $\tau/(2\widetilde{C})\le t_2-t_1\le T$, a contradiction. Hence $\eta(t, \vec{u})\not\in \mathcal{K}_\tau$ for any $t\in (0, \tau/(2\widetilde{C}))$.
Then as Step 1 in the proof of Theorem \ref{th1}, we deduce from (\ref{eq2-30}) and (\ref{eq2-32}) that
\begin{align*}
\bar{c}-\al\e< J_\bb (\eta(\tau/(2\widetilde{C}), \vec{u}))\le J_\bb(\vec{u})-\int_0^{\tau/(2\widetilde{C})}C_0\e\,dt<\bar{c}+\al\e-2\al\e=\bar{c}-\al\e,
\end{align*}
a contradiction.
Hence (\ref{eq6-1}) has infinite many radially symmetric sign-changing solutions.
This completes the proof.\end{proof}

\begin{proof}[Proof of Theorem \ref{th6-3}.] It suffices to prove that (\ref{eq6-1}) has infinitely
many semi-nodal solutions. This argument is similar as above, we omit the details.\end{proof}

\noindent{\it Acknowledgements.} The authors wish to thank the anonymous referee very much for careful reading and valuable comments.


\begin{thebibliography}{100}

\bibitem{AA} N. Akhmediev and A. Ankiewicz,  Partially coherent solitons on a finite background,
{\it Phys. Rev. Lett.,} {\bf 82}
(1999), 2661-2664.


\bibitem{AC2} A. Ambrosetti and E. Colorado, Standing waves of some coupled nonlinear Schr\"{o}dinger
equations, {\it J. London. Math. Soc., } {\bf 75} (2007), 67--82.


\bibitem{BDW} T. Bartsch, N. Dancer and Z.-Q. Wang, A Liouville theorem, a priori bounds, and bifurcating
branches of positive solutions for a nonlinear elliptic system, {\it Calc. Var. PDE.,} {\bf 37} (2010), 345-361.



\bibitem{BLT} T. Bartsch, Z. Liu and T. Weth, Sign changing solutions of superlinear Schr\"{o}dinger equations,
{\it Comm. Partial Differ. Equ.,} {\bf 29} (2004), 25-42.

\bibitem{BW} T. Bartsch and Z.-Q. Wang, Note on ground states of nonlinear Schr\"{o}dinger systems,
{\it J. Partial Differ. Equ.,} {\bf 19 } (2006), 200-207.

\bibitem{BWW} T. Bartsch, Z.-Q. Wang and J. Wei, Bound states for a
 coupled Schr\"{o}dinger system,
{\it J. Fixed Point Theo. Appl.,} {\bf 2} (2007), 353-367.


\bibitem{CL0} L.A. Caffarelli and F.-H. Lin, Singularly perturbed elliptic systems and multi-valued harmonic functions
with free boundaries, {\it J. Amer. Math. Soc.,} {\bf 21} (2008), 847-862.


\bibitem{CLLL} S. Chang, C. Lin, T. Lin and W. Lin, Segregated nodal domains of two-dimensional multispecies Bose-Einstein condensates, {\it Phys. D.,}
{\bf 196} (2004) 341-361.


\bibitem{CZ} Z. Chen and W. Zou, Positive least energy solutions and phase separation for coupled
  Schr\"{o}dinger equations with critical exponent, {\it Arch. Ration. Mech. Anal.,} {\bf 205} (2012), 515-551.

\bibitem{CZ1} Z. Chen and W. Zou, An optimal constant for the existence of least energy solutions of a coupled
Schr\"{o}dinger system, {\it Calc. Var. PDE.,} {\bf 48} (2013), 695-711.

\bibitem{CMT} M. Conti, L. Merizzi and S. Terracini, Remarks on variational methods and lower-upper solutions,
{\it NoDEA Nonlinear Differ. Equ. Appl.,} {\bf 6} (1999), 371-393.

\bibitem{CTV} M. Conti, S. Terracini and G. Verzini, Asymptotic estimates for the spatial segregation of competitive systems,
{\it Adv. Math.,} {\bf 195} (2005), 524-560.



\bibitem{DWW} N. Dancer, J. Wei and T. Weth, A priori bounds versus multiple existence of positive solutions
for a nonlinear Schr\"{o}dinger systems, {\it Ann. Inst. H. Poincar\'{e} AN.,} {\bf 27} (2010), 953-969.

\bibitem{EGBB} B. Esry, C. Greene, J. Burke and J. Bohn, Hartree-Fock theory for double condensates,
{\it Phys. Rev. Lett.,} {\bf 78}
(1997), 3594-3597.

\bibitem{KA} T. Kato, Schr\"{o}dinger operators with singular
potentials, {\it Israel J. Math.,} {\bf 13} (1972), 135-148.

\bibitem{LW1} T. Lin and J. Wei, Ground state of $N$ coupled nonlinear Schr\"{o}dinger equations
in $\R^n$, $n\le 3$, {\it Commun. Math. Phys.,} {\bf 255} (2005), 629-653.

\bibitem{LW2} T. Lin and J. Wei, Spikes in two coupled nonlinear Schr\"{o}dinger equations,
{\it Ann. Inst. H. Poincar\'{e} AN.} {\bf 22} (2005), 403-439.

\bibitem{LLW} J. Liu, X. Liu and Z.-Q. Wang, Multiple mixed states of nodal solutions for nonlinear Schr\"{o}dinger systems, {\it Calc. Var. PDE.,} (2014), to appear.

\bibitem{LW} Z. Liu and Z.-Q. Wang, Multiple bound states of nonlinear Schr\"{o}dinger systems,
{\it Commun. Math. Phys.,} {\bf 282} (2008), 721-731.

\bibitem{LWZ} Z. Liu and Z.-Q. Wang, Ground states and bound states of a nonlinear Schr\"{o}dinger system,
{\it Adv. Nonl. Stud.,} {\bf 10} (2010), 175-193.

\bibitem{MMP} L. Maia, E. Montefusco and B. Pellacci, Positive solutions for a weakly coupled nonlinear
Schr\"{o}dinger system, {\it J. Differ. Equ.,} {\bf 229} (2006), 743-767.

\bibitem{MMP1} L. Maia, E. Montefusco and B. Pellacci, Infinitely many nodal solutions for a weakly coupled nonlinear
Schr\"{o}dinger system, {\it Comm. Contemp. Math.,} {\bf 10} (2008), 651-669.


\bibitem{Miranda} C. Miranda, Un'osservazione su un teorema di Brouwer, {\it Bull. Un. Mat. Ital.,} {\bf 3} (1940), 5-7.

\bibitem{NR} B. Noris and M. Ramos, Existence and bounds of positive solutions for a nonlinear
Schr\"{o}dinger system, {\it Proc. Amer. Math. Soc.,}
{\bf 138} (2010), 1681-1692.

\bibitem{NTTV1} B. Noris, H. Tavares, S. Terracini and G. Verzini, Convergence of minimax and continuation of critical points for singularly perturbed systems, {\it J. Eur. Math. Soc.,} {\bf 14} (2012), 1245-1273.

\bibitem{NTTV} B. Noris, H. Tavares, S. Terracini and G. Verzini, Uniform H\"{o}lder bounds for nonlinear
Schr\"{o}dinger systems with strong competition, {\it Comm. Pure Appl. Math.,} {\bf 63} (2010), 267-302.


\bibitem{S} B. Sirakov, Least energy solitary waves for a system of nonlinear Schr\"{o}dinger equations
in $\R^n$, {\it Commun. Math. Phys.,} {\bf 271} (2007), 199-221.


\bibitem{SW} Y. Sato and Z. Wang, On the multiple existence of semi-positive solutions for a nonlinear Schr\"{o}dinger system,
{\it Ann. Inst. H. Poincar\'{e} AN.,} to appear.



\bibitem{TT} H. Tavares and S. Terracini, Sign-changing solutions of competition-diffusion elliptic systems and optimal partition problems,
{\it Ann. Inst. H. Poincar\'{e} AN.,} {\bf 29} (2012), 279-300.

\bibitem{TT1} H. Tavares and S. Terracini, Regularity of the nodal set
of segregated critical configurations under a weak reflection law, {\it Calc. Var. PDE.,} {\bf 45} (2012), 273-317.


\bibitem{TV} S. Terracini and G. Verzini, Multipulse phases in $k$-mixtures of Bose-Einstein condensates, {\it Arch. Ration. Mech. Anal.,} {\bf 194} (2009), 717-741.


\bibitem{WW2} J. Wei and T. Weth, Radial solutions and phase separation in a system of two coupled Schr\"{o}dinger
equations, {\it Arch. Ration. Mech. Anal.,} {\bf 190} (2008), 83-106.

\bibitem{WW3} J. Wei and T. Weth, Asymptotic behaviour of solutions of planar elliptic systems with strong competition,
{\it Nonlinearity,} {\bf 21} (2008), 305-317.

\bibitem{Zou} W. Zou, Sign-changing critical points theory, Springer-New York, 2008.



\end{thebibliography}
\end{document}